\date{}
\newtheorem{theorem}{Theorem}[section]
\newtheorem{lemma}[theorem]{Lemma}
\newtheorem{remark}[theorem]{Remark}
\newtheorem{proposition}[theorem]{Proposition}
\newtheorem{example}[theorem]{Example}
\numberwithin{equation}{section}
\begin{document}

\centerline{{\large\bf Convergence  in variation of solutions of nonlinear}}

\centerline{{\large\bf Fokker--Planck--Kolmogorov equations to stationary measures}}

\vskip .1in

\centerline{\bf V.I. Bogachev$^{a}$\footnote{corresponding author.
{\it E-mail addresses}: vibogach@mail.ru (V. Bogachev),
roeckner@math.uni-bielefeld.de (M. R\"ockner), starticle@mail.ru (S. Shaposhnikov)},
M. R\"ockner$^{b}$,
S.V. Shaposhnikov$^{a}$}

\vspace*{0.2cm}

\noindent
\small{$^{a}${\it Lomonosov Moscow State University and National Research University Higher School of Economics, Moscow, Russia}

\noindent
$^{b}${\it Fakult\"at f\"ur Mathematik, Universit\"at Bielefeld,
D-33501 Bielefeld, Germany}
}

\vspace*{0.3cm}

{\bf Abstract}
We study convergence in variation of probability solutions of nonlinear
Fokker--Planck--Kolmogorov equations to stationary solutions.
We obtain sufficient conditions for the exponential convergence of solutions to  the stationary solution
in case of coefficients that can have an arbitrary growth at infinity and depend
on the solutions through convolutions with unbounded discontinuous kernels.
In addition, we study a more difficult case where the nonlinear equation has several stationary solutions and convergence
to a  stationary solution depends on initial data. Finally, we obtain sufficient conditions for solvability of
nonlinear Fokker--Planck--Kolmogorov equations.

\vskip .1in

{\bf Keywords:}
Nonlinear Fokker--Planck--Kolmogorov equation, stationary measure, expo\-nen\-tial convergence

\vskip .1in
MSC: 60J75, 47G20, 60G52

\section{Introduction}

We consider the Cauchy problem for a nonlinear Fokker--Planck--Kolmogorov
equation
\begin{equation}\label{eq1A}
\partial_t\mu_t=\sum_{1\le i, j\le d}\partial_{x_i}
\partial_{x_j}\bigl(a^{ij}(x)\mu_t\bigr)-
\sum_{i=1}^d\partial_{x_i}\bigl(b^i(x, \mu_t)\mu_t\bigr),
\quad \mu_0=\nu,
\end{equation}
in which the nonlinearity originates from the dependence of the drift term $b$
on the unknown solution.
This equation is understood in the sense of the integral identity
\begin{multline}\label{eq1Aint}
\int_{\mathbb{R}^d} \varphi(x)\, \mu_t(dx) -
\int_{\mathbb{R}^d} \varphi(x)\, \nu(dx)
\\
=\int_0^t \int_{\mathbb{R}^d}
[{\rm trace}\, (A(x)D_x^2\varphi(x))+\langle b(x,\mu_s),\nabla
\varphi(x)\rangle]
  \, \mu_s(dx)\, ds ,\quad \varphi\in C_0^\infty(\mathbb{R}^d).
\end{multline}
Solutions to the stationary equation
\begin{equation}\label{eq1AS}
\sum_{1\le i, j\le d}\partial_{x_i}\partial_{x_j}\bigl(a^{ij}(x)\mu_t\bigr)-
\sum_{i=1}^d\partial_{x_i}\bigl(b^i(x, \mu_t)\mu_t\bigr)=0
\end{equation}
are defined similarly by means of the integral identity
\begin{equation}\label{eq1ASint}
\int_{\mathbb{R}^d}
[{\rm trace}\, (A(x)D_x^2\varphi(x))+\langle b(x,\mu),\nabla \varphi(x)\rangle]
  \, \mu(dx)=0,\quad \varphi\in C_0^\infty(\mathbb{R}^d).
\end{equation}
For  a recent detailed presentation of the  theory
of linear Fokker--Planck--Kolmogorov  equations, see~\cite{BKRSH-book},
where also some comments on nonlinear problems can be found.

Our main result (Theorem~\ref{th1}) gives sufficient conditions for the existence of a
stationary probability solution
$\mu$ and the exponential convergence to this stationary solution
in total variation norm. To be more precise,
we obtain a bound
$$
\|W\cdot(\mu_t-\mu)\|_{TV}\le \alpha_1 e^{-\alpha_2t}
$$
with a suitable growing function $W$ (so that the left-hand side dominates
the usual total variation norm).
Informally, our results are of the following nature:
the drift term $b$ in the nonlinear equation
depends on a parameter $\varepsilon\ge 0$ such that for $\varepsilon=0$ the equation
has certain nice properties, for  example, becomes linear (actually, the situation is more general)
with reasonable properties, then for $\varepsilon$ small enough both
the original nonlinear equation and the stationary equation are solvable and
we have exponential convergence in  total variation norm with a weight.

First of all we consider the following very typical example demonstrating  phenomena
arising  in the study of convergence of solutions of a nonlinear
Fokker--Planck--Kolmogorov equation to the stationary distribution.

\begin{example}\label{ex1.1}
\rm
Let $d=1$, $A=I$ and $b(x, \mu)=-x+\varepsilon B(\mu)$, where
$$
B(\mu)=\int_{\mathbb{R}}x\, \mu(dx).
$$
In case $\varepsilon<1$ the unique
solution of the stationary  equation is the standard Gaussian measure $\mu$ (see, e.g., \cite{BKSH17}).
One can show that the transition probabilities $\mu_t$ forming
the solution   to the  Cauchy problem (\ref{eq1A}),
for every initial condition~$\nu$ (with a finite first moment),
converge exponentially to the stationary measure. This is discussed in Remark~\ref{r3.7}
along with the case $\varepsilon>1$.
If $\varepsilon=1$, then  every measure $\mu$ given by a density
$$
\varrho_a(x)=\frac{1}{\sqrt{2\pi}}\exp\bigl(-|x-a|^2/2\bigr),
\quad a\in\mathbb{R}^d,
$$
satisfies the stationary equation.
It is readily seen that the measures $\mu_t$ converge to that stationary measure which
has the same mean as~$\nu$.
Indeed, in the case under consideration the mean of $\mu_t$ does not depend on time
and coincides with the mean of $\nu$.
Therefore, if the mean of $\nu$ coincides with that of~$\mu$, then the mean
of $\mu_t$ coincides with the mean of~$\mu$, i.e.,
$B(\mu_t)=B(\mu)$, and the measures $\mu_t$ satisfy the linear
Fokker--Planck--Kolmogorov equation corresponding to the Ornstein--Uhlenbeck type operator,
for which convergence to the solution of the stationary  equation is well known.
\end{example}

Thus, already in this very simple one-dimensional example we see
that convergence to the stationary  distribution
depends not only on the form of  the nonlinearity, but also on the initial condition.
Moreover, an important  role is  played by certain quantities invariant along the
trajectories of solutions to the  Fokker--Planck--Kolmogorov equation.
Note that the existence of a  stationary solution  and convergence
to it are not stable under small perturbations of the coefficients. For example, if $b(x, \mu)
=-x+\delta+\varepsilon B(\mu)$ with arbitrarily small $\delta>0$,
then for $\varepsilon=1$ there are no stationary solutions,
because  for the solution  $\mu$ we must have the equality $(1-\varepsilon)B(\mu) =\delta$.

In this paper we show that the picture described in this special example
takes place in a very general situation.
Certainly, in the general case it becomes difficult to take into account specific features
of concrete equations, but  some typical cases can be singled out.

In the example considered  above the coefficient  $b(x, \mu)$ has the form $b_0(x)
+\varepsilon b_1(x, \mu)$.
Con\-ver\-gence of solutions to the  Cauchy  problem for nonlinear Fokker--Planck--Kolmogorov equations
 with drift  coefficients of such a form have been studied in the paper
\cite{OB}, where it has been shown that convergence to the stationary distribution takes place
in case of a sufficiently small number $\varepsilon$,
provided that the coefficients are Lipschitz in $x$ and Lipschitz
in $\mu$ with respect to the Kantorovich metric.
In addition, the term $b_1$ has been assumed to be globally bounded.
In the paper \cite{Ebr} a similar result has been obtained with the aid of the method
of coupling in the case where
$$
\langle b_0(x)-b_0(y), x-y\rangle\le -\kappa(|x-y|)|x-y|,
$$
$$
b_1(x, \mu)=\int_{\mathbb{R}^d} K(x, y)\,d\mu, \quad |K(x, y)-K(x', y')|\le C(|x-x'|+|y-y'|),
$$
i.e., the global boundedness of $b_1$ has been weakened by means of the monotonicity condition
for $b_0$.
The smallness of the parameter  $\varepsilon$ is important not only for convergence, but also
for the existence of a  stationary distribution, which is seen from the example  above.
However, the situation is actually more complicated  (and this is also seen from the example above)
and convergence depends on the initial condition
and the stationary measure. An important role in determining conditions under which one has convergence is played
by certain quantities that are invariant or unboundedly increasing along trajectories of the
  Fokker--Planck--Kolmogorov equation (in our example such a quantity
is the centre of mass). Finding such quantities enables one to single out classes of initial conditions
for which one has convergence to the stationary distribution or to prove
that there is no such convergence. In the case of a nondegenerate diffusion  the
assumption of Lipschitzness of $b(x, \mu)$ in $\mu$ with respect to the
metric employed in \cite{OB} and \cite{Ebr} becomes superfluous,
because any solution possesses a density  and it is more natural to replace the Kantorovich metric by the
total variation distance.

In this paper we study convergence in variation.
The next two examples illustrate our main results.
Let $m\ge 1$ and
$$
b_{\varepsilon}(x, \mu)=b_0(x)+\varepsilon \int_{\mathbb{R}^d} K(x, y)\,\mu(dy),
$$
where for all $x,y\in\mathbb{R}^d$,
$$
\langle b_0(x), x\rangle\le c_1-c_2|x|^2, \quad \langle K(x, y), x\rangle \le c_3+c_3|x|^2, \quad C_3<c_2,
$$
$$
|b_0(x)|\le c_4+c_4|x|^m, \quad |K(x, y)|\le c_5(1+|x|^m)(1+|y|^m).
$$
Then, there is $\varepsilon_0>0$ such that for all $\varepsilon\in(0, \varepsilon_0)$ there exists a solution
 $\mu$ to the stationary Fokker--Planck--Kolmogorov equation (\ref{eq1AS})
with the coefficients $A=I$ and $b_{\varepsilon}(x, \mu)$.
Moreover, for every probability measure $\nu$ such that  $(1+|x|)^{2m+1}\in L^1(\nu)$, the solutions $\mu_t$
to the Cauchy problem  (\ref{eq1A}) with initial data $\nu$
converge to $\mu$ as $t\to+\infty$ and
$$
\|(1+|x|^m)(\mu_t-\mu)\|_{TV}\le \alpha_1e^{-\alpha_2t}, \quad \alpha_1, \alpha_2>0.
$$
Our second example concerns the case where the stationary equation has several solutions.
Let $d=2$, $x=(x_1, x_2)$, $y=(y_1, y_2)$
and $b_{\varepsilon}(x, \mu)=(b_{\varepsilon}^1(x, \mu), b_{\varepsilon}^2(x, \mu))$, where
$$
b_{\varepsilon}^1(x, \mu)=-2x_1+\int_{\mathbb{R}^2} (y_1+y_2)\,\mu(dy)+\varepsilon\int_{\mathbb{R}^2} H(x, y)\,\mu(dy),
$$
$$
b_{\varepsilon}^2(x, \mu)=-2x_2+\int_{\mathbb{R}^2} (y_1+y_2)\,\mu(dy)-\varepsilon\int_{\mathbb{R}^2} H(x, y)\,\mu(dy),
$$
with some bounded Borel function
$H\colon\mathbb{R}\times \mathbb{R}\to\mathbb{R}$.

Then, for every number $Q_0>0$, there is a number $\varepsilon_0>0$, depending
only on $Q_0$, such that for any $\varepsilon\in(0, \varepsilon_0)$ and
$Q\in (-Q_0, Q_0)$ there exists a solution $\mu$ to the stationary
Fokker--Planck--Kolmogorov equation (\ref{eq1AS})
with the coefficients $A=I$ and $b_{\varepsilon}(x, \mu)$ for which
$$
\int_{\mathbb{R}^2} (y_1+y_2)\,\mu(dy)=Q.
$$
Moreover, for every probability measure $\nu$ such that $|x|^2\in L^1(\nu)$ and
$$
\int_{\mathbb{R}^2} (y_1+y_2)\,\nu(dy)=Q,
$$
the solutions  $\mu_t$ to the Cauchy problem (\ref{eq1A}) with initial data $\nu$
converge to $\mu$ as $t\to+\infty$ and
$$
\|(1+|x|)(\mu_t-\mu)\|_{TV}\le \alpha_1e^{-\alpha_2t}, \quad \alpha_1, \alpha_2>0.
$$
More general conditions and examples are discussed after Theorems \ref{th1} and \ref{th2}.

Note that it is often simpler,
and in the case of a degenerate diffusion matrix more natural, to consider convergence in the Kantorovich metric
(see Remark~\ref{remK} below).
Results of this sort  for non-gradient drift coefficients were apparently first
obtained  in~\cite{AHMED93} and have been recently generalized in  \cite{Ebr}, \cite{Ver03}, and \cite{Wang17}.
See  also the related, but more special papers \cite{Benachour1} and \cite{Benachour2}.
The gradient case, where $b=\nabla V$, has been studied  in many papers,
starting from \cite{Daw}, \cite{Tam1}, \cite{Tam2} and further studied in many papers on
the theory of gradient flows (see \cite{AMBSG},  \cite{BoGenGu}, \cite{CarMacVil}, and \cite{GDFL}).
In the theory of gradient flows an important  role is played by the Kantorovich
$2$-metric and the geometry of the space of probability measures connected with this metric.

There is a vast literature devoted to nonlinear  Fokker--Planck--Kolmogorov equations
 (see, e.g., \cite{Frank}).
It should be emphasized  that in this paper  we study equations with
nonlocal nonlinearities (of the type of the so-called McKean--Vlasov equations),
the investigation of which was initiated  in the well-known papers
\cite{Kac}, \cite{McKean1}, \cite{McKean2}, \cite{Funaki} and continued by many researchers.
  This circumstance explains the character of our assumptions about the drift, which are quite natural
  for such nonlinearities. For instance, the continuity of $b(\mu,x)$ in $\mu$ in total variation norm
   holds when  $b(\mu,x)$  depends on $\mu$ through convolution, but not when $b(\mu,x)$ depends
  on the value of the density of $\mu$ at~$x$.
Existence and uniqueness of solutions  and properties of the distributions of stochastic McKean--Vlasov
equations (distributions of such equations satisfy nonlinear Fokker--Planck--Kolmogorov equations)
are discussed, e.g., in  \cite{Ver16} and \cite{Wang17}. In particular,
in \cite{Wang17}, sufficient conditions
(monotonicity of the coefficient~$b$, as in \cite{Ebr}) for the
existence and uniqueness of solutions are given and
convergence in the Kantorovich $2$-metric to the stationary distribution is shown.

Existence and uniqueness of solutions to nonlinear  Fokker--Planck--Kolmogorov  equations
with irregular and rapidly growing
coefficients have been discussed in the recent papers \cite{ManRomShap} and \cite{ManShap},
which also contain some examples of non-uniqueness. The papers \cite{JF16} and \cite{Man} develop
an approach to nonlinear equations based on  estimates of
distances between solutions to linear   Fokker--Planck--Kolmogorov equations
with different diffusion matrices and different drift coefficients.
Analogous questions for nonlinear stationary
Fokker--Planck--Kolmogorov equations are studied in  \cite{BKSH14}, \cite{BKSH17}, and \cite{Ton},
 where the existence of stationary solutions is proved with the
aid of  fixed point theorems applied to
the nonlinear mapping that maps a probability measure $\sigma$ to the solution  $\mu$
of the linear equation with the drift coefficient $b(x, \sigma)$. The phenomenon of nonuniqueness
of a stationary measure is investigated in \cite{HT10}, where certain explicit nonlinear
expressions for stationary measures are written out in the gradient case.
In this paper  we also obtain some generalizations of
existence and uniqueness results for stationary and parabolic Fokker--Planck--Kolmogorov equations.

The problem of convergence to the stationary  measure for a linear
Fokker--Planck--Kolmogorov equation has been thoroughly studied, and one can single out the following three
approaches:

1) the approach based on the Harris theorem or the
Meyn--Tweedie approach with Lyapunov functions (see, e.g., \cite{DowMeinTvid} and \cite{HM}),

2) the approach based on entropy estimates and Poincar\'e and Sobolev inequalities
(see, e.g., \cite{Ar1}, \cite{AMTU}, \cite{BakryG}, \cite{Boll}, \cite{Cat},
\cite{RW}, and~\cite{Wang-book}),

3) the probabilistic approach based on coupling (see, e.g., \cite{Ebr0},
\cite{Ebr}, \cite{ChLi}, and \cite{LuoW}).

In this paper  we employ the first approach and for verification of the conditions of the Harris theorem
we use certain estimates for transition probabilities from \cite[Chapter~8]{BKRSH-book}.
This enables us to substantially weaken the assumptions about the regularity
of coefficients, but, on the  other hand, some weak points of the Meyn--Tweedie approach
connected with a complicated dependence of constants on the  coefficients of the equation remain
also in our case.

Throughout  we assume  that the matrix $A(x)=(a^{ij}(x))_{1\le i, j\le d}$
is symmetric and there exist numbers $K_1>0$ and $K_2>0$ such that
$$
K_1^{-1}I\le A(x)\le K_1I, \quad |A(x)-A(y)|\le K_2|x-y|.
$$

Let $V\in C^2(\mathbb{R}^d)$, $V\ge 1$ and $\lim\limits_{|x|\to+\infty}V(x) =+\infty$.
Let $\mathcal{P}_V(\mathbb{R}^d)$ denote the space of all probability measures
$\mu$ on $\mathbb{R}^d$ such that
$$
\int_{\mathbb{R}^d} V\,d\mu<\infty.
$$
Set
$$
W(x)=V(x)^{\gamma}, \quad \gamma\in(0, 1/2].
$$
Typical examples are  $V(x)=1+|x|^{2m}$.
We recall that the total variation norm of a finite (possibly, signed)
measure $\sigma$ is defined by
$$
\|\sigma\|_{TV}=|\sigma|(\mathbb{R}^d),
$$
where $|\sigma|=\sigma^{+}+\sigma^{-}$ and $\sigma=\sigma^{+}-\sigma^{-}$
is the Hahn decomposition
into the difference of mutually singular nonnegative measures.
The symbol $W\cdot \mu$ denotes the measure given by the density $W$
with respect to the measure~$\mu$.
Set
$$
\|\mu\|_{W}=\|W\cdot\mu\|_{TV}.
$$

Suppose that for every $\mu\in \mathcal{P}_V(\mathbb{R}^d)$ we have
 a Borel vector field $b(x, \mu)=(b^i(x, \mu))_{1\le i\le d}$
on $\mathbb{R}^d$  such that  there exists a number $C(\mu)$ for which
$$|b(x, \mu)|\le C(\mu)V(x)^{1-\gamma}.$$
It will be assumed below that $b$ satisfies certain additional conditions.

We say that a family $\{\mu_t\}_{t\in [0,T]}$ of probability
measures $\mu_t\in\mathcal{P}_V(\mathbb{R}^d)$ satisfies the
Cauchy problem (\ref{eq1A}) on $[0,T]$, where $T>0$ is fixed, if
 equality (\ref{eq1Aint}) holds.
A measure $\mu\in\mathcal{P}_V(\mathbb{R}^d)$ is called a solution to the
stationary  equation  (\ref{eq1AS}) if  equality (\ref{eq1ASint}) is fulfilled.

Set
$$
L_{\mu}\varphi(x)={\rm trace}(A(x)D^2\varphi(x))+\langle b(x, \mu), \nabla\varphi(x) \rangle.
$$
Throughout for the stationary  equation (\ref{eq1AS}) and for the parabolic
equation  (\ref{eq1A})
we use the shortened equalities $L_{\mu}^{*}\mu=0$ and
$\partial_t\mu_t=L_{\mu_t}^{*}\mu_t$.
Similarly we write  linear equations $L_{\sigma}^{*}\mu=0$
and $\partial_t\mu_t=L_{\sigma}^{*}\mu_t$
with the coefficient $b(x, \sigma)$. Solutions to linear
Fokker--Planck--Kolmogorov equations are defined
precisely as in the case of general nonlinear equations by means of integral equalities
of the form (\ref{eq1Aint}) and (\ref{eq1ASint}).

This paper  consists of the introduction  and three sections.
In Section~2 we discuss some classes of functions $\psi$ on $\mathbb{R}^d$ such
that the integral of $\psi$ against $\mu_t$ for solutions $\{\mu_t\}$
to the  Cauchy problem (\ref{eq1A})
is constant or equals a constant multiplied by a function
of the form $\exp(\lambda t)$.

With the aid of such functions one can formulate simple tests to show that
convergence to stationary solutions fails to hold. In addition, if we know that
$$\int_{\mathbb{R}^d} \psi\,d\mu_t\equiv \int\psi\,d\nu
$$
with $\mu_t$ satisfying our Cauchy problem
and
$$
b(x, \mu)=b_0(x)+\int_{\mathbb{R}^d} \psi\,d\mu,
$$
then
$$
b(\mu_t)=b_0(x)+const
$$
i.e.,  along solutions the drift depends only on $x$ and actually is independent
of~$\mu_t$.

At the end of Section 2 we formulate our main
conditions on the  coefficients of the equation. In Section~3 we formulate
and prove the main results of the paper. The first main result
(Theorem~\ref{th1})
enables us to determine by the initial condition and the stationary solution whether there is
convergence of solutions of the  Cauchy problem (\ref{eq1A}) with this initial condition to the
stationary solution. The second main result
(Theorem~\ref{th2}) gives sufficient conditions under which a stationary
 solution exists and for every initial condition (having a  finite moment of a suitable order)
the solutions of the  Cauchy problem converge to  this stationary solution.
In Section~4 we discuss
conditions for the existence of solutions to the stationary equation and the Cauchy problem.

\section{Invariant  and subinvariant functions}

Here we consider certain conservation laws for solutions.

For a function $W$ as above,
 let  $I_0^W$ denote the set of  functions $\psi\in C^2(\mathbb{R}^d)$ such that
\begin{equation}\label{inves}
\sup_{x}\Bigl(|\psi(x)|+|\nabla\psi(x)|+|D^2\psi(x)|\Bigr)W(x)^{-1}<\infty
\end{equation}
and for every measure $\mu\in\mathcal{P}_V(\mathbb{R}^d)$ we have
\begin{equation}\label{inveq}
\int_{\mathbb{R}^d}L_{\mu}\psi\,d\mu=0.
\end{equation}

It is clear that $I_0^W$ is a linear space containing $1$.
Further  for brevity we use the notation
$$
\mu(\psi):=\int_{\mathbb{R}^d}\psi\,d\mu.
$$

\begin{proposition}\label{pr00}
\, {\rm (i)} \, If $\psi\in I_0^W$ and $\{\mu_t\}$ is a solution to the
Cauchy problem {\rm (\ref{eq1A})}
with the initial condition $\nu$, then $\mu_t(\psi)=\nu(\psi)$.

\, {\rm (ii)} \, If $\mu\in\mathcal{P}_V(\mathbb{R}^d)$
is a solution to the stationary equation {\rm (\ref{eq1AS})} and
$$\nu(\psi)\neq \mu(\psi)$$
for some function $\psi\in I_0^W$, then
the solutions $\mu_t$ to the  Cauchy problem {\rm (\ref{eq1A})} with the initial condition
$\nu$ do not converge to $\mu$ with respect to the norm $\| \,\cdot\, \|_{W}$.
\end{proposition}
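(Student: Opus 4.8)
The plan is to prove (i) first---that $t\mapsto\mu_t(\psi)$ is constant---and then obtain (ii) as a quick consequence. The integral identity (\ref{eq1Aint}) is only assumed for test functions $\varphi\in C_0^\infty(\mathbb{R}^d)$, whereas $\psi\in I_0^W$ merely satisfies the growth bound (\ref{inves}); so the first task is to extend (\ref{eq1Aint}) to such $\psi$ by truncation. I would fix $\zeta\in C_0^\infty(\mathbb{R}^d)$ with $0\le\zeta\le 1$, $\zeta=1$ on the unit ball and $\zeta=0$ outside the ball of radius $2$, and set $\zeta_n(x)=\zeta(x/n)$, so that $\|\nabla\zeta_n\|_\infty\le C/n$ and $\|D^2\zeta_n\|_\infty\le C/n^2$, with $\nabla\zeta_n$ and $D^2\zeta_n$ supported in the shell $S_n=\{n\le|x|\le 2n\}$. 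Applying (\ref{eq1Aint}) to $\varphi=\psi\zeta_n\in C_0^\infty(\mathbb{R}^d)$ and using the second-order Leibniz rule
$$
L_{\mu_s}(\psi\zeta_n)=\zeta_n L_{\mu_s}\psi+\psi L_{\mu_s}\zeta_n+2\langle A\nabla\psi,\nabla\zeta_n\rangle ,
$$
I would then let $n\to\infty$.

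On the left-hand side, $\int\psi\zeta_n\,d\mu_t\to\mu_t(\psi)$ and likewise for $\nu$, by dominated convergence, the dominating function being $|\psi|\le CW\le CV\in L^1(\mu_t)$ (recall $W=V^{\gamma}$ with $\gamma\le 1/2$ and $V\ge 1$, so $W\le V$). On the right-hand side, the principal term $\int_0^t\!\int\zeta_n L_{\mu_s}\psi\,d\mu_s\,ds$ converges to $\int_0^t\!\int L_{\mu_s}\psi\,d\mu_s\,ds$, which vanishes by the defining property (\ref{inveq}) of $I_0^W$. The two correction terms vanish in the limit as well: using $|\psi|,|\nabla\psi|\le CW=CV^{\gamma}$, the bound $|A|\le K_1$, and $|b(x,\mu_s)|\le C(\mu_s)V^{1-\gamma}$, one estimates on the support $S_n$
$$
\int\bigl(|\psi\, L_{\mu_s}\zeta_n|+2|\langle A\nabla\psi,\nabla\zeta_n\rangle|\bigr)\,d\mu_s
\le \frac{C\,(1+C(\mu_s))}{n}\int_{S_n}V\,d\mu_s ,
$$
and the right-hand side tends to $0$ because $\int V\,d\mu_s<\infty$ forces the tail $\int_{S_n}V\,d\mu_s\to 0$ while the prefactor carries an extra factor $n^{-1}$.

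The step I expect to require the most care is passing these limits \emph{through the time integral} $\int_0^t(\cdots)\,ds$: the pointwise-in-$s$ bounds above carry the $\mu_s$-dependent constant $C(\mu_s)$, so to invoke dominated convergence in $s$ one needs $s\mapsto C(\mu_s)$ integrable on $[0,t]$ together with $\sup_{s\le t}\int V\,d\mu_s<\infty$---precisely the integrability that already makes the right-hand side of (\ref{eq1Aint}) finite for a genuine solution in $\mathcal{P}_V(\mathbb{R}^d)$. Granting this, the extended identity reads $\mu_t(\psi)-\nu(\psi)=\int_0^t\!\int L_{\mu_s}\psi\,d\mu_s\,ds=0$, which proves~(i).

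For (ii), I would argue by contradiction: suppose $\|\mu_t-\mu\|_W\to 0$. Since $W=V^{\gamma}\ge 1$ and $|\psi|\le CW$, the ratio $\psi/W$ is bounded by $C$, whence
$$
|\mu_t(\psi)-\mu(\psi)|=\Bigl|\int\tfrac{\psi}{W}\,d\bigl(W\cdot(\mu_t-\mu)\bigr)\Bigr|
\le C\,\|W\cdot(\mu_t-\mu)\|_{TV}=C\,\|\mu_t-\mu\|_W\to 0 .
$$
Thus $\mu_t(\psi)\to\mu(\psi)$; but part (i) gives $\mu_t(\psi)=\nu(\psi)$ for every $t$, so the limit forces $\nu(\psi)=\mu(\psi)$, contradicting the hypothesis $\nu(\psi)\neq\mu(\psi)$. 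Hence no convergence in $\|\,\cdot\,\|_W$ can occur, which is assertion~(ii). (Both $\mu(\psi)$ and $\nu(\psi)$ are finite since $\mu,\nu\in\mathcal{P}_V(\mathbb{R}^d)$ and $|\psi|\le CV$.)
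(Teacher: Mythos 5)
Your proof is correct and follows essentially the same route as the paper, whose entire argument is the one-line identity $\mu_t(\psi)-\nu(\psi)=\int_0^t\int_{\mathbb{R}^d}L_{\mu_s}\psi\,d\mu_s\,ds=0$ ``which follows from the equation,'' with (ii) deduced immediately from (i) via $|\psi|\le CW$. The only difference is that you supply the truncation argument (cutoffs $\zeta_n$, the Leibniz expansion of $L_{\mu_s}(\psi\zeta_n)$, and the tail estimates using $|\psi|,|\nabla\psi|,|D^2\psi|\le CW$ and $|b(x,\mu_s)|\le C(\mu_s)V^{1-\gamma}$) that the paper leaves implicit, and you correctly flag the one point of real care --- integrability in $s$ needed for dominated convergence through the time integral --- which the paper also tacitly assumes as part of the meaning of a solution in $\mathcal{P}_V(\mathbb{R}^d)$.
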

\begin{proof}
Assertion  (ii) follows from (i). For justifying (i) it suffices to observe that
$$
\mu_t(\psi)-\nu(\psi)=\int_0^t\int_{\mathbb{R}^d}L_{\mu_s}\psi\,d\mu_s\,ds=0,
$$
which follows from the equation.
\end{proof}

Let us consider an important example where  $A=I$ and
$$
b(x, \mu)=-\int_{\mathbb{R}^d}K(x, y)\,\mu(dy),
$$
where $K$ is a vector-valued mapping.

\begin{proposition}\label{pr01}
A function  $\psi$ satisfying {\rm (\ref{inves})} belongs to $I_0^W$ if and only if
for all $x, y$ we have
$$
\Delta\psi(x)+\Delta\psi(y)- \langle K(x, y),\nabla\psi(x)\rangle -\langle K(y, x),\nabla\psi(y)\rangle=0.
$$
In particular, if $\psi\in I_0^W$, then $\Delta\psi(x)-\langle K(x, x),\nabla\psi(x)\rangle=0$.
Moreover, if
$$(Q,K(x, y))=-(Q,K(y, x))$$
for some constant vector $Q$
and $W(x)$ is growing not more slowly than $|x|$, then $I_0^W$ contains all
functions of the form $\psi(x)=(Q,x)+g$, where $g$ is a constant number.
\end{proposition}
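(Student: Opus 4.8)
The plan is to reduce the defining condition $\int_{\mathbb{R}^d} L_\mu\psi\,d\mu=0$ to a pointwise identity by rewriting it as a symmetric quadratic form in $\mu$ and then testing against finitely supported measures. First I would observe that with $A=I$ and $b(x,\mu)=-\int K(x,y)\,\mu(dy)$ one has
\[
\int_{\mathbb{R}^d} L_\mu\psi\,d\mu=\int_{\mathbb{R}^d}\Delta\psi(x)\,\mu(dx)-\int_{\mathbb{R}^d}\int_{\mathbb{R}^d}\langle K(x,y),\nabla\psi(x)\rangle\,\mu(dy)\,\mu(dx).
\]
Since $\mu$ is a probability measure, the first term equals $\int\int\Delta\psi(x)\,\mu(dx)\,\mu(dy)$, so the whole expression becomes $\int\int F(x,y)\,\mu(dx)\,\mu(dy)$ with $F(x,y)=\Delta\psi(x)-\langle K(x,y),\nabla\psi(x)\rangle$. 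Because the product measure $\mu\otimes\mu$ is invariant under interchanging $x$ and $y$, I may replace $F$ by its symmetrization, obtaining $\int L_\mu\psi\,d\mu=\tfrac12\int\int G(x,y)\,\mu(dx)\,\mu(dy)$, where $G(x,y)$ is exactly the left-hand side of the claimed identity.

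The equivalence then follows by choosing convenient measures. The ``if'' direction is immediate: if $G\equiv 0$ the quadratic form vanishes for every $\mu$. For the converse I would test against point masses and their pairwise averages, all of which lie in $\mathcal{P}_V(\mathbb{R}^d)$ because $V$ is finite and continuous, so the $V$-integral of any finitely supported measure is finite and integrability of $L_\mu\psi$ is automatic for such $\mu$. Taking $\mu=\delta_x$ yields $G(x,x)=0$, that is, $\Delta\psi(x)-\langle K(x,x),\nabla\psi(x)\rangle=0$, which is the ``in particular'' assertion; taking $\mu=\tfrac12(\delta_x+\delta_y)$ gives $\tfrac14 G(x,y)=0$ (using $G(x,x)=G(y,y)=0$ and the symmetry $G(x,y)=G(y,x)$), hence $G(x,y)=0$ for all $x,y$.

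Finally, for the ``moreover'' part I would verify the two membership requirements for $\psi(x)=(Q,x)+g$. The growth bound (\ref{inves}) holds because $\nabla\psi\equiv Q$ and $D^2\psi\equiv 0$ are bounded, while $|\psi(x)|\le|Q||x|+|g|$ grows at most linearly, so dividing by a weight $W$ that grows at least like $|x|$ (and satisfies $W\ge 1$) keeps the supremum finite. Since $\psi$ is affine, $\Delta\psi=0$ and $\nabla\psi=Q$, whence $G(x,y)=-(Q,K(x,y))-(Q,K(y,x))$, which vanishes precisely by the assumed antisymmetry $(Q,K(x,y))=-(Q,K(y,x))$; by the equivalence just established, $\psi\in I_0^W$. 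The only steps requiring any care are the symmetrization and the admissibility of the test measures, but both are routine here: the former is merely Fubini together with relabelling, and the latter is guaranteed by the continuity of $V$, so I do not anticipate a genuine obstacle.
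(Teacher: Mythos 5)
Your proof is correct and follows essentially the same route as the paper: both rewrite $\int L_\mu\psi\,d\mu$ as the integral of the symmetrized kernel $G(x,y)$ against $\mu\otimes\mu$ and conclude that $G$ must vanish identically. Your testing with $\delta_x$ and $\tfrac12(\delta_x+\delta_y)$ simply makes explicit the step the paper compresses into ``holds for every $\mu$ if and only if the integrand is skew-symmetric, hence (being symmetric) zero,'' and your direct verification of the growth bound and of $G\equiv 0$ for the affine functions in the ``moreover'' part fills in details the paper leaves implicit.
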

\begin{proof}
We observe that (\ref{inveq}) is equivalent to the equality
$$
\int_{\mathbb{R}^d\times\mathbb{R}^d}
\Bigl[\Delta\psi(x)+\Delta\psi(y)-\langle K(x, y),\nabla\psi(x)\rangle-\langle K(y, x),\nabla\psi(y)\rangle
\Bigr]\, \mu\otimes\mu(dx\, dy)=0,
$$
which  holds for every probability measure $\mu\in\mathcal{P}_V(\mathbb{R}^d)$
if and only if the expression under the integral sign is skew symmetric in $x$ and $y$.
Since we have a symmetric function there, it must vanish.
\end{proof}

\begin{example}\rm
\, (i) \, Let $d=1$. The space of solutions to the equation  $\psi''-K(x, x)\psi'=0$
 is the linear span of $1$ and the primitive of the function
$$
\exp\Bigl(\int_0^xK(s, s)\,ds\Bigr).
$$
A nonconstant function $\psi$  satisfies the condition of Proposition \ref{pr01}
if and only if for all $x, y$ we have
$$
K(x, x)+K(y, y)=K(x, y)+K(y, x).
$$
The latter relation is satisfied, for example, for the functions $K(x, y)=H(x-y)$,
 where $H$ is an odd function. In this case  $I_0^W$ is the linear span of  $1$ and $x$.

\, (ii) \, Let $d\ge 1$ and
$$
K(x, y)=-Rx+\langle v, y\rangle h+H(x, y),
$$
where $R$ is a constant matrix, $v$ and $h$ are constant vectors
and
$$R^{*}v=\lambda v, \quad \langle v, h\rangle=\lambda, \quad \langle H(x, y), v\rangle=0.$$
Then the function
$x\to \langle v, x\rangle$
belongs to $I_0^W$. Indeed, we have
$$
\langle v, K(x, y)\rangle=-\lambda\langle v, x\rangle+\lambda\langle v, y\rangle=-\langle v, K(y, x)\rangle.
$$
\end{example}

Let  $I_{+}^W$ denote  the set of all functions $\psi\in C^2(\mathbb{R}^d)$ such  that
$\psi$ satisfies  condition (\ref{inves}) and there exists a number
$\lambda=\lambda(\psi)>0$ such that
\begin{equation}\label{subinv}
\int_{\mathbb{R}^d}L_{\mu}\psi\,d\mu=\lambda\int_{\mathbb{R}^d}\psi\,d\mu
\quad \forall\, \mu\in\mathcal{P}_V(\mathbb{R}^d).
\end{equation}

\begin{proposition}\label{pr02}
\, {\rm (i)} \, If $\psi\in I_{+}^W$ and $\mu_t$ is a solution to the  Cauchy problem {\rm (\ref{eq1A})}
with the  initial condition $\nu$, then $\mu_t(\psi)=\nu(\psi)e^{\lambda(\psi)t}$.

\, {\rm (ii)} \, If $\mu\in\mathcal{P}_V(\mathbb{R}^d)$ is a solution to the
stationary equation  {\rm (\ref{eq1AS})} and $\psi\in I_{+}^W$, then $\mu(\psi)=0$.

\, {\rm (iii)} \, If $\nu(\psi)\neq 0$ for some $\psi\in I_{+}^W$, then the solutions $\mu_t$
to the  Cauchy problem {\rm (\ref{eq1A})}
do not converge to the stationary solution with respect to the norm $\| \,\cdot\, \|_W$.
\end{proposition}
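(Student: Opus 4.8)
The plan is to mirror the proof of Proposition~\ref{pr00}, replacing the conservation law (\ref{inveq}) by the growth relation (\ref{subinv}); assertion (i) carries all the analytic content, and (ii)--(iii) then follow formally. To prove (i), I would first note that $\psi$ may be used as a test function in the integral identity (\ref{eq1Aint}), since (\ref{inves}) gives $|\psi|+|\nabla\psi|+|D^2\psi|\le C W\le C V$ (using $V\ge 1$ and $\gamma\le 1/2$). Thus
\[
\mu_t(\psi)-\nu(\psi)=\int_0^t\int_{\mathbb{R}^d}L_{\mu_s}\psi\,d\mu_s\,ds.
\]
The defining property (\ref{subinv}) of $I_+^W$ gives $\int_{\mathbb{R}^d}L_{\mu_s}\psi\,d\mu_s=\lambda\,\mu_s(\psi)$ for every $s$, so that, writing $f(t)=\mu_t(\psi)$,
\[
f(t)=f(0)+\lambda\int_0^t f(s)\,ds.
\]
Since the right-hand side is continuous in $t$, $f$ is continuous, hence $C^1$, and satisfies $f'=\lambda f$; therefore $f(t)=f(0)e^{\lambda t}=\nu(\psi)e^{\lambda(\psi)t}$, which is (i).

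Assertion (ii) is then immediate. For a stationary solution $\mu$ the identity (\ref{eq1ASint}), extended to the admissible test function $\psi$ as above, yields $\int_{\mathbb{R}^d}L_\mu\psi\,d\mu=0$, while (\ref{subinv}) gives $\int_{\mathbb{R}^d}L_\mu\psi\,d\mu=\lambda\,\mu(\psi)$. Because $\lambda=\lambda(\psi)>0$, this forces $\mu(\psi)=0$.

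For (iii) I would combine the two previous parts. Assume $\nu(\psi)\neq 0$. By (i) we have $\mu_t(\psi)=\nu(\psi)e^{\lambda t}$, so $|\mu_t(\psi)|\to+\infty$ as $t\to+\infty$. On the other hand, if $\mu_t$ converged to a stationary solution $\mu$ in the norm $\|\cdot\|_W$, then, using $|\psi|\le C W$ from (\ref{inves}),
\[
|\mu_t(\psi)-\mu(\psi)|\le\int_{\mathbb{R}^d}|\psi|\,d|\mu_t-\mu|\le C\,\|\mu_t-\mu\|_W\to 0,
\]
so that $\mu_t(\psi)$ would converge to the finite number $\mu(\psi)$ (in fact $0$, by (ii)). This contradicts the divergence of $\mu_t(\psi)$, and hence no such convergence can hold.

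The only genuinely non-routine point is the justification, used in (i) and (ii), that a merely growth-bounded $\psi$ may replace the compactly supported test functions in the integral identities (\ref{eq1Aint}) and (\ref{eq1ASint}). I expect this to be the main technical obstacle: it requires a cut-off approximation $\psi\chi_R$ together with the bounds $|L_{\mu_s}\psi|\le C(\mu_s)V$ and the moment control $\int V\,d\mu_s<\infty$ to pass to the limit, exactly as in the proof of Proposition~\ref{pr00}. Once this extension is granted, the remaining arguments are purely formal manipulations of the resulting scalar identities.
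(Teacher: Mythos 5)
Your proposal is correct and follows essentially the same route as the paper, which deduces (i) from the identity $\mu_t(\psi)-\nu(\psi)=\int_0^t\int_{\mathbb{R}^d}L_{\mu_s}\psi\,d\mu_s\,ds=\lambda\int_0^t\mu_s(\psi)\,ds$ and then obtains (ii) and (iii) as immediate consequences, exactly as you do. The only difference is one of detail: you spell out the integral-equation-to-ODE step and the cut-off justification for using a $W$-growth test function $\psi$ in (\ref{eq1Aint}) and (\ref{eq1ASint}), points the paper leaves implicit (its proof is three lines), and your treatment of them is sound.
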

\begin{proof}
Assertion  (ii) is obvious. Assertion (iii) follows from~(i), and (i) is deduced from the equality
$$
\mu_t(\psi)-\nu(\psi)=\int_0^t\int_{\mathbb{R}^d}L_{\mu_s}\psi\,d\mu_s\,ds
=\lambda\int_0^t\mu_s(\psi)\,ds,
$$
satisfied by the solution.
\end{proof}

As above, let us consider the case where $A=I$ and
$$
b(x, \mu)=-\int_{\mathbb{R}^d}K(x, y)\,\mu(dy)
$$
with a vector-valued mapping $K$.

\begin{proposition}\label{pr03}
A function  $\psi$ satisfying {\rm (\ref{inves})} belongs to
$I_{+}^W$ if and only if for some $\lambda>0$ and for all $x, y$ we have
$$
\Delta\psi(x)+\Delta\psi(y)-(K(x, y),\nabla\psi(x))-(K(y, x),\nabla\psi(y))=\lambda(\psi(x)+\psi(y)).
$$
In particular, if $\psi\in I_{+}^W$, then $\Delta\psi(x)-(K(x, x),\nabla\psi(x))=\lambda\psi(x)$.
\end{proposition}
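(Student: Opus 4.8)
The plan is to follow the proof of Proposition~\ref{pr01} essentially verbatim, the only new ingredient being the inhomogeneous right-hand side $\lambda\mu(\psi)$ in the defining relation (\ref{subinv}). First I would insert $A=I$ and the convolution form of the drift into the generator, writing
$$
L_{\mu}\psi(x)=\Delta\psi(x)-\int_{\mathbb{R}^d}\langle K(x,y),\nabla\psi(x)\rangle\,\mu(dy).
$$
The bound (\ref{inves}) on $\psi$ together with the standing estimate $|b(x,\mu)|\le C(\mu)V(x)^{1-\gamma}$ and the finiteness of $\int V\,d\mu$ guarantee that all the integrals below converge. Integrating against $\mu$ and using that $\mu$ is a probability measure to turn the single integral $\int\Delta\psi\,d\mu$ into a double one, I obtain
$$
\int_{\mathbb{R}^d}L_{\mu}\psi\,d\mu=\int_{\mathbb{R}^d\times\mathbb{R}^d}\bigl[\Delta\psi(x)-\langle K(x,y),\nabla\psi(x)\rangle\bigr]\,\mu\otimes\mu(dx\,dy).
$$

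Next I would express the right-hand side of (\ref{subinv}) in the same product form, $\lambda\mu(\psi)=\lambda\int_{\mathbb{R}^d\times\mathbb{R}^d}\psi(x)\,\mu\otimes\mu(dx\,dy)$, so that membership in $I_{+}^W$ amounts to the assertion that
$$
\int_{\mathbb{R}^d\times\mathbb{R}^d}\bigl[\Delta\psi(x)-\langle K(x,y),\nabla\psi(x)\rangle-\lambda\psi(x)\bigr]\,\mu\otimes\mu(dx\,dy)=0
$$
for every $\mu\in\mathcal{P}_V(\mathbb{R}^d)$. Since an integral against the symmetric measure $\mu\otimes\mu$ only sees the symmetric part of the integrand, this is equivalent to the vanishing of the symmetrized integral, whose integrand is the symmetric function
$$
\tilde F(x,y)=\Delta\psi(x)+\Delta\psi(y)-\langle K(x,y),\nabla\psi(x)\rangle-\langle K(y,x),\nabla\psi(y)\rangle-\lambda(\psi(x)+\psi(y)).
$$

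The heart of the matter, exactly as in Proposition~\ref{pr01}, is to deduce from $\int\tilde F\,d\mu\otimes\mu=0$ for all $\mu$ that $\tilde F\equiv 0$; this is the one step where I would take care. Testing with the Dirac measures $\delta_x$ (which belong to $\mathcal{P}_V(\mathbb{R}^d)$ since $V(x)<\infty$) gives $\tilde F(x,x)=0$, and testing with $\tfrac12(\delta_x+\delta_y)$, together with the symmetry of $\tilde F$, isolates $\tilde F(x,y)=0$ for arbitrary $x,y$. This yields the stated characterization, the constraint $\lambda>0$ being carried over from the definition of $I_{+}^W$; the converse direction is immediate, since integrating the pointwise identity $\tilde F\equiv 0$ against $\mu\otimes\mu$ reproduces (\ref{subinv}). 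Finally, the ``in particular'' statement follows by setting $y=x$ in the characterization and dividing by $2$, which gives $\Delta\psi(x)-\langle K(x,x),\nabla\psi(x)\rangle=\lambda\psi(x)$. I expect no genuine obstacle; the only delicate point is the passage from the product-measure identity to the pointwise one, which hinges on the admissibility of one- and two-point measures in $\mathcal{P}_V(\mathbb{R}^d)$.
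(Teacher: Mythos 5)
Your proof is correct and follows essentially the same route as the paper, which simply invokes the symmetrization argument of Proposition~\ref{pr01}: pass to the double integral against $\mu\otimes\mu$, symmetrize the integrand, and conclude it must vanish identically. The only difference is that you make explicit the final step (testing with $\delta_x$ and $\tfrac12(\delta_x+\delta_y)$, both admissible in $\mathcal{P}_V(\mathbb{R}^d)$), which the paper leaves implicit in its remark that a function both symmetric and skew-symmetric vanishes.
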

\begin{proof}
The same reasoning as in the case of $I_0^W$ works.
\end{proof}

Let $d=1$ and $K(x, x)=-qx$, where $q$ is a positive constant.
Then the equation on the function $\psi$ takes the form
$$
\psi''+qx\psi'=\lambda\psi.
$$
If $\lambda=q$, then $\psi(x)=x$ is a solution.
If $K(x, x)=0$, then the equation takes the form $\psi''=\lambda\psi$ and
 linear combinations of the exponents  $e^{\sqrt{\lambda}x}$
and $e^{-\sqrt{\lambda}x}$ are all solutions.

\begin{proposition}\label{pr04}
Suppose that for some function $\psi\in I_0^W$ there exists a
continuous function $h$ such that
$\sup_x|h(x)|/V(x)<\infty$ and for every probability measure
 $\sigma\in\mathcal{P}_V(\mathbb{R}^d)$ we have
$$
L_{\sigma}\psi(x)=C_1(\sigma)h(x)+C_2(\sigma), \quad C_1(\sigma)\neq 0
$$
with some numbers $C_1(\sigma)$ and $C_2(\sigma)$.
Suppose that $\mu\in\mathcal{P}_V(\mathbb{R}^d)$
satisfies the stationary equation $L^{*}_{\sigma}\mu=0$.
Then $\mu(h)=\sigma(h)$.
The analogous assertion is true if $\psi\in I_{+}^W$ and $\sigma(\psi)=0$.
\end{proposition}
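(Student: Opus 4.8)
The plan is to test the linear stationary equation $L_\sigma^{*}\mu=0$ against the function $\psi$ itself rather than against compactly supported test functions, to substitute the assumed representation $L_\sigma\psi=C_1(\sigma)h+C_2(\sigma)$ so that the identity becomes a linear relation between $\mu(h)$ and the constants $C_1(\sigma),C_2(\sigma)$, and then to observe that the membership $\psi\in I_0^W$ produces the very same relation with $\sigma$ in place of $\mu$. Comparing the two relations and dividing by $C_1(\sigma)\neq 0$ will give $\mu(h)=\sigma(h)$.

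First I would check that $L_\sigma\psi$ is integrable against measures in $\mathcal{P}_V(\mathbb{R}^d)$. From $A\le K_1 I$ and (\ref{inves}) one has $|\mathrm{trace}(A D^2\psi)|\le C W=C V^{\gamma}$, while the standing bound $|b(x,\sigma)|\le C(\sigma)V^{1-\gamma}$ together with $|\nabla\psi|\le C V^{\gamma}$ gives $|\langle b(\cdot,\sigma),\nabla\psi\rangle|\le C(\sigma)V$. Hence $|L_\sigma\psi|\le C(\sigma)V$, which is integrable against both $\mu$ and $\sigma$, since both lie in $\mathcal{P}_V(\mathbb{R}^d)$.

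The main step, and the only real obstacle, is to justify $\int L_\sigma\psi\,d\mu=0$, i.e.\ to pass from the admissible test class $C_0^\infty$ in (\ref{eq1ASint}) to the non-compactly-supported $\psi$. I would take cutoffs $\chi_n\in C_0^\infty$ with $\chi_n=1$ on $\{|x|\le n\}$, $\mathrm{supp}\,\chi_n\subset\{|x|\le 2n\}$, $|\nabla\chi_n|\le C/n$ and $|D^2\chi_n|\le C/n^2$, and apply (\ref{eq1ASint}) to $\psi\chi_n$. Expanding by the product rule and using the symmetry of $A$ yields
$$
L_\sigma(\psi\chi_n)=\chi_n L_\sigma\psi+\psi\bigl(\mathrm{trace}(A D^2\chi_n)+\langle b(\cdot,\sigma),\nabla\chi_n\rangle\bigr)+2\langle A\nabla\chi_n,\nabla\psi\rangle.
$$
The first term converges pointwise to $L_\sigma\psi$ and is dominated by $C(\sigma)V$, so its integral tends to $\int L_\sigma\psi\,d\mu$ by dominated convergence. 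Each remaining term is supported in the annulus $\{n\le|x|\le 2n\}$; using $|\psi|,|\nabla\psi|\le C V^{\gamma}$, the cutoff bounds, and $V^{\gamma}\le V$, all of them are dominated by $C(\sigma)V\,\mathbf{1}_{\{|x|>n\}}$, which tends to $0$ pointwise and is dominated by the integrable majorant $C(\sigma)V$; hence their integrals vanish in the limit. This gives $\int L_\sigma\psi\,d\mu=0$.

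Finally I would insert the representation $L_\sigma\psi=C_1(\sigma)h+C_2(\sigma)$ into the two identities now at hand. The stationary equation, after the extension just proved, gives $0=\int L_\sigma\psi\,d\mu=C_1(\sigma)\mu(h)+C_2(\sigma)$, whereas the defining property (\ref{inveq}) of $I_0^W$, applied with the particular measure $\sigma$, gives $0=\int L_\sigma\psi\,d\sigma=C_1(\sigma)\sigma(h)+C_2(\sigma)$. Since $C_1(\sigma)\neq 0$, both $\mu(h)$ and $\sigma(h)$ equal $-C_2(\sigma)/C_1(\sigma)$, so $\mu(h)=\sigma(h)$. For the variant with $\psi\in I_+^W$ and $\sigma(\psi)=0$, the only change is that the defining identity (\ref{subinv}) applied to $\sigma$ reads $\int L_\sigma\psi\,d\sigma=\lambda\,\sigma(\psi)=0$; the remainder of the argument is verbatim.
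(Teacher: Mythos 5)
Your proof is correct and follows essentially the same route as the paper: compare $\int L_\sigma\psi\,d\sigma=0$ (from the defining identity of $I_0^W$, respectively $I_+^W$ with $\sigma(\psi)=0$) with $\int L_\sigma\psi\,d\mu=0$ (from $L_\sigma^{*}\mu=0$), substitute $L_\sigma\psi=C_1(\sigma)h+C_2(\sigma)$, and divide by $C_1(\sigma)\neq 0$. The only difference is that you carefully justify, via cutoffs and dominated convergence, the extension of the stationary identity (\ref{eq1ASint}) from $C_0^\infty$ test functions to the growing function $\psi$ — a standard step the paper leaves implicit.
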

\begin{proof}
By the definition of $I_0^W$ and the fact that $\mu$ is a solution to the
stationary  equation  $L^{*}_{\sigma}\mu=0$ we have the equalities
$$
C_1(\sigma)\int_{\mathbb{R}^d}h\,d\sigma+C_2(\sigma)=
\int_{\mathbb{R}^d}L_{\sigma}\psi\,d\sigma=0
=\int_{\mathbb{R}^d}L_{\sigma}\psi\,d\mu=C_1(\sigma)\int_{\mathbb{R}^d}h\,
d\mu+C_2(\sigma).
$$
Since $C_1(\sigma)\neq 0$, we obtain $\mu(h)=\sigma(h)$.
\end{proof}

\begin{example}\label{exa1}
\rm
\, (i) \, Let $d=1$, $A=1$ and
$$
b(x, \mu)=f(x)-\int_{\mathbb{R}^d}f(y)\,\mu(dy),
$$
where $f$ is a reasonable function. Then
the function $\psi(x)=x$ belongs to $I_0^W$ and for every $\sigma$ one has
$$
L_{\sigma}x=f(x)-\int_{\mathbb{R}^d}f(y)\,\sigma(dy)=f(x)+C_2(\sigma).
$$
Therefore, for the stationary solution  $\mu$ of the equation with the operator
$L_{\sigma}$ the equality $\mu(f)=\sigma(f)$ holds.

\, (ii) \, Let $d\ge 1$, $A=I$,
$$b(x, \mu)=\int_{\mathbb{R}^d} K(x, y)\,\mu(dy)$$
and
$$
K(x, y)=-Rx+\langle v, y\rangle h+H(x, y),
$$
where $R$ is a constant matrix, $v$ and $h$ are constant vectors and
$$R^{*}v=\lambda v, \quad \lambda\neq 0, \quad \langle v, h\rangle=\lambda, \quad \langle H(x, y), v\rangle=0.$$
Then the function $x\to \langle v, x\rangle$ belongs to $I_0^W$ and
$$
L_{\sigma}\langle v, x\rangle=-\lambda\langle v, x\rangle+\lambda \int_{\mathbb{R}^d} \langle v, x\rangle\,\sigma(dy).
$$
Therefore, for the stationary solution $\mu$ to the equation
with the operator $L_{\sigma}$ we have that $\mu(h)=\sigma(h)$ with $h(x)=\langle v, x\rangle$.
In particular, if $d=2$, $x=(x_1, x_2)$, $y=(y_1, y_2)$ and
$$
K_1(x, y)=-2x_1+(y_1+y_2)+H(x, y), \quad K_2(x, y)=-2x_2+(y_1+y_2)-H(x, y),
$$
then $\mu(x_1+x_2)=\sigma(x_1+x_2)$. Here $R=2I$, $v=(1, 1)$ and $h=(1, 1)$.
\end{example}

Proposition  \ref{pr04} differs from Propositions \ref{pr00} and \ref{pr02}
in which we studied the dynamics of certain quantities
along trajectories of solutions.
Proposition  \ref{pr04} will play the key role in constructing stationary solutions
to nonlinear Fokker--Planck--Kolmogorov equations in Section~4
 (see Proposition \ref{pr1}).
Note that the observations above are rather rough and in special
situations more refined considerations are possible (see, for example, \cite{HT10}),
but it seems reasonable to begin the study of convergence of solutions of the Cauchy
problem  to the solution of the stationary equation from finding
quantities a priori invariant  or subinvariant along trajectories of solutions.

Closing this section we formulate our conditions on the coefficients
in terms of the sets $I_0^W$ and $I_{+}^W$.
It is reasonable  (with regards towards convergence) to consider only
measures $\mu\in\mathcal{P}_V(\mathbb{R}^d)$
such that $\mu(\psi)=0$ for every function $\psi\in I_{+}^W$.
We observe that if $\nu$ equals zero on
all functions from $I_{+}^W$
(i.e., assigns zero integrals to such functions), then the same is true for the solution $\mu_t$ to the  Cauchy problem.

Set
$$
\mathcal{M}_{\alpha}(V)=
\Bigl\{\mu\in\mathcal{P}_V(\mathbb{R}^d)\colon
\int_{\mathbb{R}^d} V\,d\mu\le \alpha\Bigr\}.
$$
Recall that $W=V^{\gamma}$, where $\gamma\in(0, 1/2]$, and $\|\mu\|_{W}
=\|W\mu\|_{TV}$.

Suppose that for every $\varepsilon\in [0, 1)$ we are given a mapping
$$
b_{\varepsilon}(\,\cdot \,, \,\cdot\,)\colon \mathbb{R}^d\times\mathcal{P}_V(\mathbb{R}^d)\to\mathbb{R}^d
$$
such that for every $\mu\in\mathcal{P}_V(\mathbb{R}^d)$
the mapping $x\mapsto b_{\varepsilon}(x, \mu)$ is Borel.
Let
$$
L_{\mu, \varepsilon}u(x)={\rm trace}\, (A(x)D_x^2u(x))+\langle b_{\varepsilon}(x,\mu),\nabla u(x)\rangle.
$$
Suppose that for every measure $\nu\in\mathcal{P}_V(\mathbb{R}^d)$ with
$\nu|_{I_{+}^W}=0$ there exist numbers $C>0$,
$\Lambda>0$ and $\delta\in[0, 1]$ and a positive
function $N_1$ on $[0, +\infty)$ (thus for different $\nu$ these objects can be different)
such that

\, ${\rm{\bf (H_1)}}$ \, for all $\varepsilon\in[0, 1)$, $\alpha\ge 1$ and
$\mu\in\mathcal{M}_{\alpha}(V)$ satisfying the conditions
$\mu|_{I_{+}^W}=0$ and $\mu|_{I_0^W}=\nu|_{I_0^W}$, we have
$$
L_{\mu, \varepsilon}V(x)\le (1-\delta)C+\Lambda(\delta\alpha-V(x))
\quad
\forall x\in\mathbb{R}^d,
$$

\, ${\rm {\bf(H_2)}}$ \, for all $\varepsilon\in[0, 1)$,  $\alpha$ and
$\mu\in\mathcal{M}_{\alpha}(V)$
satisfying the conditions $\mu|_{I_{+}^W}=0$ and $\mu|_{I_0^W}=\nu|_{I_0^W}$,
we have
$$
|b_{\varepsilon}(x, \mu)|\le N_1(\alpha)V(x)^{\frac{1}{2}-\gamma} \quad
\forall x\in\mathbb{R}^d.
$$

Suppose that there exists a positive function $N_2$
on $[0, +\infty)$ such that

\, ${\rm{\bf (H_3)}}$ \, for all $\varepsilon\in[0, 1)$,  $\alpha\ge0$ and $\mu,
\sigma\in\mathcal{M}_{\alpha}(V)$ satisfying the conditions
$\mu|_{I_{+}^W}=\sigma|_{I_{+}^W}=0$ and $\mu|_{I_0^W}=\sigma|_{I_0^W}$,
 we have
$$
|b_{\varepsilon}(x, \mu)-b_{\varepsilon}(x, \sigma)|\le \varepsilon
N_2(\alpha)V(x)^{\frac{1}{2}-\gamma}\|\mu-\sigma\|_{W} \quad
\forall x\in\mathbb{R}^d.
$$
Note that if
$$
b_{\varepsilon}(x, \mu)=\int_{\mathbb{R}^d} K_{\varepsilon}(x, y)\,\mu(dy) +\widetilde{b_{\varepsilon}}(x, \mu)
$$
and for every $x$ the
function $y\mapsto K_{\varepsilon}(x, y)$ belongs to $I_0^W$, then condition ${\rm (H_3)}$
refers only to $\widetilde{b_{\varepsilon}}$, since the difference
of the integrals of $K_{\varepsilon}(x, y)$ with respect to two measures $\mu$ and $\sigma$
with $\mu|_{I_0^W}=\sigma|_{I_0^W}$ is zero.

For example, this is the case where $d=1$, $A=I$ and
$$
b_{\varepsilon}(x, \mu)=-x+\int_{\mathbb{R}} y\,\mu(dy).
$$
Here $b_{\varepsilon}$ does not depend on $\varepsilon$.
The function $x\mapsto x$ belongs to $I_0^W$ and for every measure $\mu$ satisfying the equality
$$
\int_{\mathbb{R}} x\,\mu(dx)=\int_{\mathbb{R}} x\,\nu(dx)=Q
$$
we have
$$
L_{\mu, \varepsilon}(1+|x|^2)\le 3+Q^2-(1+|x|^2), \quad |b(x, \mu)|\le |Q|+|x|\le (1+|Q|)(1+|x|^2)^{1/2},
$$
i.e., conditions ${\rm (H_1)}$, ${\rm (H_2)}$ and ${\rm (H_3)}$ are fulfilled with
$$C=2+Q^2, \quad \delta=0, \quad \Lambda=1,\quad \gamma=1/2, \quad N_1=1+|Q|, \quad N_2=0.$$

Note that ${\rm (H_3)}$ obviously holds at $\varepsilon=0$ if $b_0$ does not depend on~$\mu$, i.e.,
the equation at $\varepsilon=0$ becomes linear, but the previous example shows that this condition can hold
also in case of a nontrivial dependence on~$\mu$.

The main result of this paper (presented in the next section)
states that, for all sufficiently
small $\varepsilon$, the listed conditions ensure the exponential convergence
to the stationary distribution.

\section{Convergence to stationary solutions}

Suppose that for every $\nu\in\mathcal{P}_V(\mathbb{R}^d)$
 there is a solution  $\{\mu_t\}$
to the   problem~(\ref{eq1A}) on~$[0, +\infty)$. Sufficient conditions for the existence of solutions
to parabolic and stationary Fokker--Planck--Kolmogorov equations are discussed
in the last section.
It is immediate  that $\mu_t|_{I_0^W}=\nu|_{I_0^W}$ according to Proposition \ref{pr00}.
Moreover, if $\nu|_{I_{+}^W}=0$, then $\mu_t|_{I_{+}^W}=0$
according to Proposition~\ref{pr02}.
We shall now use conditions
${\rm (H_1)}$ -- ${\rm (H_3)}$ introduced
at the end of the previous section.

\begin{theorem}\label{th1}
Suppose that conditions ${\rm (H_1)}$, ${\rm (H_2)}$ and ${\rm (H_3)}$ are fulfilled.
Let $\nu\in\mathcal{P}_V(\mathbb{R}^d)$, $\nu|_{I_{+}^W}=0$ and $\alpha>0$.
Then there exist positive numbers $\varepsilon_0$, $\alpha_1$ and $\alpha_2$
{\rm(}depending on $\nu$ and $\alpha${\rm)}
such that, whenever $\varepsilon\in [0, \varepsilon_0)$,
for the solution $\mu_t$
to the Cauchy problem {\rm (\ref{eq1A})} with coefficients $A$ and $b_{\varepsilon}$
and initial data $\nu$ and the stationary solution $\mu$ to  equation {\rm (\ref{eq1AS})}
with  coefficients $A$ and $b_{\varepsilon}$ such that
$$
\mu|_{I_0^W}=\nu|_{I_0^W} \quad \hbox{and} \quad \int_{\mathbb{R}^d} V\,d\mu\le \alpha,
$$
we have
$$
\|\mu_t-\mu\|_W\le \alpha_1e^{-\alpha_2t} \quad \forall\, t\ge 0.
$$
\end{theorem}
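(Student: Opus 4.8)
The plan is to exploit that, along a fixed trajectory $\{\mu_t\}$, both the Cauchy solution and the stationary measure solve \emph{linear} Fokker--Planck--Kolmogorov equations, and to treat the nonlinearity as an $O(\varepsilon)$ perturbation governed by ${\rm (H_3)}$ via a Gronwall argument. First I would record the structural facts that keep ${\rm (H_1)}$--${\rm (H_3)}$ applicable along the whole trajectory: since $\nu|_{I_{+}^W}=0$, Propositions~\ref{pr00} and~\ref{pr02} give $\mu_t|_{I_0^W}=\nu|_{I_0^W}$ and $\mu_t|_{I_{+}^W}=0$ for every $t$, and by hypothesis $\mu$ satisfies the same relations. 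Integrating the Lyapunov bound ${\rm (H_1)}$ against $\mu_t$ (with the admissible level $\alpha$ taken to be $m(t)=\int V\,d\mu_t$) yields a differential inequality $m'(t)\le (1-\delta)\bigl(C-\Lambda m(t)\bigr)$, whence $\sup_t m(t)\le\alpha_*$ for some $\alpha_*=\alpha_*(\nu,\alpha,C,\Lambda,\delta)$. Thus all $\mu_t$ and $\mu$ lie in $\mathcal{M}_{\alpha_*}(V)$ with matching $I_0^W$-values and vanishing $I_{+}^W$-values, so ${\rm (H_2)}$ and ${\rm (H_3)}$ apply to the pairs $(\mu_t,\mu)$ uniformly in $t$ and $\varepsilon$.

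The core object is the frozen linear equation with drift $b_{\varepsilon}(\,\cdot\,,\mu)$, whose adjoint I denote $\mathcal{L}$ and whose flow on measures I denote $S_t$; by construction $S_t\mu=\mu$. Using the nondegeneracy of $A$ together with ${\rm (H_1)}$, ${\rm (H_2)}$, and the transition-probability estimates of \cite[Chapter~8]{BKRSH-book}, I would establish a Harris-type contraction $\|S_t\eta\|_{W}\le M e^{-\kappa t}\|\eta\|_{W}$ for zero-mass signed measures $\eta$ with $\eta|_{I_0^W}=\eta|_{I_{+}^W}=0$ and finite $V$-moment, where $M,\kappa$ depend on $\alpha_*$ but \emph{not} on $\varepsilon$: the Lyapunov function $V$ supplies the drift condition and the Gaussian-type density bounds supply the minorization on the sublevel sets of $V$, the weight $V^{\gamma}$ with $\gamma\le 1/2$ being subordinate to $V$ exactly as the exponents in ${\rm (H_2)}$--${\rm (H_3)}$ require. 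Applying the variation-of-constants (Duhamel) formula to $g_t=\mu_t-\mu$, and using $\mathcal{L}\mu=0$, I would write
$$
g_t=S_tg_0-\int_0^t S_{t-s}\,\mathrm{div}\bigl([b_{\varepsilon}(\,\cdot\,,\mu_s)-b_{\varepsilon}(\,\cdot\,,\mu)]\mu_s\bigr)\,ds,
\qquad g_0=\nu-\mu .
$$

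Bounding the integral term requires a smoothing estimate for the frozen flow acting on divergences, $\|S_{\tau}\,\mathrm{div}(f\mu_s)\|_{W}\le G(\tau)\,\|f\|_{*}$, where $\|f\|_{*}=\sup_x|f(x)|V(x)^{\gamma-1/2}$ and the kernel $G$ is integrable at $\tau=0$ and satisfies $G(\tau)\le M'e^{-\kappa\tau}$; this too is furnished by \cite[Chapter~8]{BKRSH-book}. Combined with ${\rm (H_3)}$, which gives $\|b_{\varepsilon}(\,\cdot\,,\mu_s)-b_{\varepsilon}(\,\cdot\,,\mu)\|_{*}\le\varepsilon N_2(\alpha_*)\|\mu_s-\mu\|_{W}$, it produces the renewal inequality
$$
\|\mu_t-\mu\|_{W}\le M e^{-\kappa t}\|\nu-\mu\|_{W}+\varepsilon N_2(\alpha_*)\int_0^t G(t-s)\,\|\mu_s-\mu\|_{W}\,ds
$$
with all constants independent of $\varepsilon$. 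Fixing any $\beta\in(0,\kappa)$ and multiplying by $e^{\beta t}$, the kernel becomes $e^{\beta(t-s)}G(t-s)$ with finite total mass $\Gamma_\beta=\int_0^{\infty}e^{\beta\tau}G(\tau)\,d\tau$; taking the supremum over $[0,t]$ then gives $\sup_{[0,t]}e^{\beta s}\|\mu_s-\mu\|_{W}\le M\|\nu-\mu\|_{W}+\varepsilon N_2(\alpha_*)\Gamma_\beta\,\sup_{[0,t]}e^{\beta s}\|\mu_s-\mu\|_{W}$. Hence, once $\varepsilon_0$ is chosen so small that $\varepsilon N_2(\alpha_*)\Gamma_\beta<1$ for $\varepsilon<\varepsilon_0$, the supremum is bounded uniformly in $t$, yielding $\|\mu_t-\mu\|_{W}\le\alpha_1 e^{-\alpha_2 t}$ with $\alpha_2=\beta$ and $\alpha_1=M\|\nu-\mu\|_{W}(1-\varepsilon N_2(\alpha_*)\Gamma_\beta)^{-1}$.

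The main obstacle I anticipate is analytic rather than structural: making the Duhamel representation and, above all, the smoothing estimate rigorous in the \emph{weighted total variation} norm, since the perturbation $-\mathrm{div}([b_{\varepsilon}(\,\cdot\,,\mu_s)-b_{\varepsilon}(\,\cdot\,,\mu)]\mu_s)$ is a distribution rather than a finite measure. This is exactly where the quantitative transition-density bounds of \cite[Chapter~8]{BKRSH-book} do the heavy lifting, and where the calibration of the exponent $\tfrac{1}{2}-\gamma$ in ${\rm (H_2)}$--${\rm (H_3)}$ against the Lyapunov weight $V$ is essential for $G$ to be integrable near $\tau=0$ while still decaying exponentially. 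Securing the $\varepsilon$-uniformity of $M$, $\kappa$, and $G$ is the other delicate point, since it is precisely this uniformity that lets the smallness of $\varepsilon$ defeat the feedback constant $N_2(\alpha_*)\Gamma_\beta$.
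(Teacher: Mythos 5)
Your overall architecture --- uniform moment bounds from $\mathrm{(H_1)}$, preservation of the $I_0^W$/$I_{+}^W$ constraints along the flow, a Harris-type exponential contraction for the frozen linear flow with drift $b_{\varepsilon}(\,\cdot\,,\mu)$ with constants uniform in $\varepsilon$, and a smallness-of-$\varepsilon$ perturbation argument closed by Gronwall --- is exactly the paper's plan, and your first two ingredients match the paper's moment estimate and Lemma~\ref{lem1}. The genuine gap is the step you yourself flag as the ``main obstacle'': the Duhamel representation together with the weighted smoothing bound $\|S_{\tau}\,\mathrm{div}(f\mu_s)\|_{W}\le G(\tau)\|f\|_{*}$ with $G$ integrable at $\tau=0$. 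Chapter~8 of \cite{BKRSH-book} supplies Harnack inequalities and pointwise upper and lower bounds for transition densities; it does not contain a gradient-smoothing estimate for the frozen semigroup in the weighted total-variation scale, which is what your bound amounts to by duality: roughly $\sup_x V(x)^{\gamma-1/2}|\nabla T_{\tau}\psi(x)|\lesssim \tau^{-1/2}$ for $|\psi|\le W$. Under the paper's weak regularity assumptions (only $A$ Lipschitz, $b$ merely Borel in $x$ with growth $V^{1/2-\gamma}$, no dissipativity beyond the Lyapunov condition) such an estimate is nontrivial and is established nowhere in your argument; without it the renewal inequality, the finiteness of $\Gamma_{\beta}$, and hence the entire contraction mechanism are unsupported. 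A subsidiary problem is the Duhamel formula itself: solutions are defined only through the integral identity, and the perturbation term $-\mathrm{div}\bigl([b_{\varepsilon}(\,\cdot\,,\mu_s)-b_{\varepsilon}(\,\cdot\,,\mu)]\mu_s\bigr)$ is a distribution, so the representation would itself need an approximation argument you do not give.

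The paper closes exactly this hole by a different device (Lemma~\ref{lem2}): instead of smoothing, it uses duality, testing $\mu_{\tau+t}-\eta_t$ against $u(x,t)=T_{s-t}\psi$ with $|\psi|\le W$, which exhibits the error as $\int_0^{t}\int\langle b(x,\mu_{\tau+s})-b(x,\mu),\nabla_x u\rangle\,d\mu_{\tau+s}\,ds$; the needed control on $\nabla_x u$ is then only an $L^2(d\mu_{\tau+s}\,ds)$ bound, obtained from an elementary energy identity for $u^2$ using uniform ellipticity and the maximum-principle bound $|u|\le 2W$, with no short-time singularity anywhere. Combined with $\mathrm{(H_3)}$ and Cauchy--Schwarz this yields $\|\mu_{\tau+t}-\eta_t\|_{W}\le\varepsilon C(\theta)\bigl(\int_0^{t}\|\mu_{\tau+s}-\mu\|_{W}^2\,ds\bigr)^{1/2}$, after which an $L^2$-Gronwall inequality and iteration over windows of fixed length $T$ (first $Ne^{-\lambda T}<1/2$, then $\varepsilon_0$ small) give the contraction $\|\mu_{\tau+T}-\mu\|_{W}\le q\|\mu_{\tau}-\mu\|_{W}$, $q<1$. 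To rescue your route you should replace the asserted smoothing kernel by this energy/duality estimate; as written, your proof rests on an estimate that is neither proved nor quotable from the cited source.
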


\begin{example}\rm
Let $d\ge 1$, $A=I$,
$$
b_{\varepsilon}(x, \mu)=-Rx+\int_{\mathbb{R}^d}\langle v, y\rangle\,\mu(dy)h+\varepsilon\int_{\mathbb{R}^d} H(x, y)\,\mu(dy),
$$
where $R$ is a constant matrix, $v$ and $h$ are constant vectors and
$$R^{*}v=\lambda v, \quad \langle v, h\rangle=\lambda, \quad \langle H(x, y), v\rangle=0.$$
Suppose also that
$$
\langle Rx, x\rangle \ge q|x|^2, \quad q>0, \quad \sup_{x, y}|H(x, y)|<\infty.
$$
For example, for $d=2$ one can take $R=2I$, $v=(1, 1)$ and $h=(1, 1)$:
$$
b^1_{\varepsilon}(x, \mu)=-2x_1+\int_{\mathbb{R}^2}(y_1+y_2)\,\mu(dy)+\varepsilon\int_{\mathbb{R}^2} H(x, y)\,\mu(dy),
$$
$$
b^2_{\varepsilon}(x, \mu)=-2x_2+\int_{\mathbb{R}^2}(y_1+y_2)\,\mu(dy)-\varepsilon\int_{\mathbb{R}^2} H(x, y)\,\mu(dy).
$$
Let us show that all conditions of Theorem \ref{th1} are fulfilled.
The function $x\mapsto \langle v, x\rangle$ belongs to~$I_0^W$.
Let $\nu$ be  a probability measure with $|x|^2\in L^1(\nu)$.
Set
$$
Q=\int_{\mathbb{R}^2} \langle v, y\rangle\,\nu(dy).
$$
For all measures $\sigma$ that coincide with $\nu$ on $I_0^W$ we have
$$
L_{\sigma,\varepsilon}(1+|x|^2)\le 2d+q+q^{-1}(|h||Q|+\sup_{x, y}|H(x, y)|)^{2}-q(1+|x|^2),
$$
$$
|b_{\varepsilon}(x, \mu)|\le (\|R\|+|h||Q|+\sup_{x, y}|H(x, y)|)(1+|x|^2)^{1/2}.
$$
Finally, for every two measures $\mu$ and $\sigma$ that coincide
on $I_0^W$ we have
$$
|b_{\varepsilon}(x, \mu)-b_{\varepsilon}(x, \sigma)|\le \varepsilon
\sup_{x, y}|H(x,y)|\|(\mu-\sigma)(1+|x|^2)^{1/2}\|_{TV}.$$
Thus, conditions ${\rm (H_1)}$, ${\rm (H_2)}$ and ${\rm (H_3)}$ are fulfilled
with $\gamma=1/2$, $W(x)=(1+|x|^2)^{1/2}$ $V(x)=1+|x|^2$, $\delta=0$, $\Lambda=1$,
$N_2=\sup_{x, y}|H(x,y)|$ and
$$
C=2d+q+q^{-1}(|h||Q|+\sup_{x, y}|H(x, y)|)^{2},
\quad N_1=\|R\|+|h||Q|+\sup_{x, y}|H(x, y)|.
$$
Moreover, it will be shown in Section~4 (see Example \ref{ex-st})
that for every $\varepsilon\in(0, 1)$ and every number $Q$ there is a stationary solution $\mu$
such that
$$
Q=\int_{\mathbb{R}^2} \langle v, y\rangle\,\mu(dy).
$$
In addition, for this solution $\mu$ we have
$$
\int_{\mathbb{R}^2} (1+|x|^2)\,\mu(dx)\le 2dq^{-1}+1+q^{-2}(|h||Q|+\sup_{x, y}|H(x, y)|)^{2}.
$$
Thus, for every number $Q_0>0$ there is  a number $\varepsilon_0>0$, depending
only on $Q_0$, such that, for any $\varepsilon\in[0, \varepsilon_0)$,
 $Q\in (-Q_0, Q_0)$ and a probability measure $\nu$ such that the integral of $\langle v, y\rangle$
with respect to $\nu$ equals $Q$ and $|x|^2\in L^1(\nu)$,
the solutions $\mu_t$ to the Cauchy problem (\ref{eq1A}) with initial data $\nu$
converge to the stationary solution $\mu$ and
$$
\|(1+|x|)(\mu_t-\mu)\|_{TV}\le \alpha_1e^{-\alpha_2t},
$$
where $\alpha_1$ and $\alpha_2$ depend only on $Q_0$ and $\|(1+|x|)^2\|_{L^1(\nu)}$.
\end{example}

Theorem \ref{th1} is of a somewhat conditional nature: 1) we assume
that there  exists a stationary solution $\mu$ such that $\nu|_{I_0^W}=\mu|_{I_0^W}$,
2) $\varepsilon_0$ depends on $\nu$ and, what is worse,  also on $\mu$.
Dependence on $\mu$ arises in connection
with dependence of our conditions on $\alpha$,
i.e., due to nonlinearity (see the discussion in the proof of Lemma \ref{lem1}
and before Lemma \ref{lem2}).
If we require stronger restrictions on the coefficients,
then we can avoid such dependence.

\begin{theorem}\label{th2}
Suppose that in place of conditions ${\rm (H_1)}$, ${\rm (H_2)}$
and ${\rm (H_3)}$ there exist positive numbers $C_1$, $C_2$
and positive functions $N_1$ and $N_2$
such that for all $\varepsilon\in[0, 1)$,  $\alpha>0$ and $\mu, \sigma\in\mathcal{M}_{\alpha}(V)$
we have $L_{\mu,\varepsilon}V\le C_1-C_2V$ and
$$
|b_{\varepsilon}(x, \mu)|\le N_1(\alpha)V^{1/2-\gamma}(x), \quad
|b_{\varepsilon}(x, \mu)-b_{\varepsilon}(x, \sigma)|\le \varepsilon N_2(\alpha)V(x)^{1/2-\gamma}
\|\mu-\sigma\|_{W}.
$$
Then there exists  $\varepsilon_0>0$,
such that, for each $\varepsilon\in[0, \varepsilon_0)$
there exists a stationary solution $\mu$ and,
for every measure $\nu\in\mathcal{P}_V(\mathbb{R}^d)$,
for the solution $\{\mu_t\}$ to the Cauchy problem
{\rm (\ref{eq1A})} with the initial condition $\nu$
 one has
$$
\|\mu_t-\mu\|_W\le \alpha_1e^{-\alpha_2t} \quad \forall\, t\ge 0,
$$
where $\alpha_1, \alpha_2$ are positive numbers
such that $\alpha_2$ does not depend on $\nu$.
\end{theorem}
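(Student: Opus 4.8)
The plan is to rerun the Harris/Meyn--Tweedie machinery that underlies Theorem~\ref{th1}, while exploiting the fact that the present hypotheses hold for \emph{all} measures in $\mathcal{M}_\alpha(V)$, with constants that are independent of $\alpha$ and free of any conditioning on the invariant spaces $I_0^W$, $I_+^W$. Indeed, the hypotheses of Theorem~\ref{th2} are exactly conditions ${\rm (H_1)}$--${\rm (H_3)}$ with $\delta=0$, $C=C_1$, $\Lambda=C_2$, but now stripped of the restrictions $\mu|_{I_+^W}=0$, $\mu|_{I_0^W}=\nu|_{I_0^W}$ and of the $\alpha$-dependence of $C,\Lambda$; this is precisely what turns the $\nu$- and $\mu$-dependent threshold $\varepsilon_0$ of Theorem~\ref{th1} into a uniform one. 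First I would record the universal moment bound that the dissipativity estimate $L_{\mu,\varepsilon}V\le C_1-C_2V$ (valid now for every $\mu\in\mathcal{P}_V(\mathbb{R}^d)$) produces: testing the equation against $V$, justified by the usual cutoff/localization using the Lyapunov structure and the nondegeneracy $K_1^{-1}I\le A\le K_1I$, gives $\tfrac{d}{dt}\mu_t(V)\le C_1-C_2\mu_t(V)$, whence $\mu_t(V)\le e^{-C_2t}\nu(V)+C_1/C_2$. Thus every trajectory enters and remains in the single ball $\mathcal{M}_{\alpha_0}(V)$ with $\alpha_0:=C_1/C_2+1$, and any stationary solution obeys $\mu(V)\le C_1/C_2$. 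This absorbing ball replaces the $\nu$-dependent level sets used in Theorem~\ref{th1}.

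Next I would construct the stationary solution by a contraction argument on $\mathcal{M}_{\alpha_0}(V)$. Let $\Phi(\sigma)$ be the probability solution of the \emph{linear} stationary equation $L^{*}_{\sigma,\varepsilon}\Phi(\sigma)=0$ with frozen drift $b_\varepsilon(\cdot,\sigma)$, which exists and is unique by the linear theory; the Lyapunov bound forces $\Phi(\sigma)\in\mathcal{M}_{\alpha_0}(V)$, so $\Phi$ maps the ball into itself. The Lipschitz hypothesis gives $|b_\varepsilon(x,\sigma_1)-b_\varepsilon(x,\sigma_2)|\le\varepsilon N_2(\alpha_0)V(x)^{1/2-\gamma}\|\sigma_1-\sigma_2\|_W$, and the stability estimates for linear Fokker--Planck--Kolmogorov equations (the same ones feeding Theorem~\ref{th1}, ultimately \cite[Chapter~8]{BKRSH-book}) bound $\|\Phi(\sigma_1)-\Phi(\sigma_2)\|_W$ by $\varepsilon\,C(\alpha_0)\|\sigma_1-\sigma_2\|_W$. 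Choosing $\varepsilon_0$ so small that $\varepsilon_0C(\alpha_0)<1$ makes $\Phi$ a contraction in $\|\cdot\|_W$, and Banach's fixed point theorem produces the stationary solution $\mu\in\mathcal{M}_{\alpha_0}(V)$, unique in this ball.

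For the convergence I would freeze the drift at $\mu$ and compare. Let $\{\widehat\mu_t\}$ solve the linear Cauchy problem $\partial_t\widehat\mu_t=L^{*}_{\mu,\varepsilon}\widehat\mu_t$, $\widehat\mu_0=\nu$, whose stationary point is exactly $\mu$ since $L^{*}_{\mu,\varepsilon}\mu=0$. Because $b_\varepsilon(\cdot,\mu)$ satisfies a uniform Lyapunov bound and $A$ is uniformly elliptic and Lipschitz, the Harris theorem (minorization on sublevel sets $\{V\le R\}$ from the transition-density lower bounds of \cite[Chapter~8]{BKRSH-book}, combined with the drift condition) yields a spectral gap $\|\widehat\mu_t-\mu\|_W\le C_3e^{-\lambda t}$, where $\lambda>0$ depends only on $\alpha_0$ and $C_3$ additionally on $\nu(V)$. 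Then $\mu_t$ and $\widehat\mu_t$ both solve linear equations differing only through the drifts $b_\varepsilon(\cdot,\mu_s)$ versus $b_\varepsilon(\cdot,\mu)$; the Lipschitz bound and a Duhamel estimate against the contractive reference semigroup (here the exponent matching $W\cdot V^{1/2-\gamma}=V^{1/2}$ keeps all integrals finite) give
$$
\|\mu_t-\widehat\mu_t\|_W\le\varepsilon\,C_4\int_0^t e^{-\lambda(t-s)}\|\mu_s-\mu\|_W\,ds.
$$
Writing $g(t)=\|\mu_t-\mu\|_W$ and combining with the triangle inequality and the Harris bound yields $e^{\lambda t}g(t)\le C_3+\varepsilon C_4\int_0^t e^{\lambda s}g(s)\,ds$, so Gr\"onwall gives $g(t)\le C_3e^{-(\lambda-\varepsilon C_4)t}$. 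Shrinking $\varepsilon_0$ so that $\varepsilon_0C_4<\lambda$ makes $\alpha_2:=\lambda-\varepsilon C_4>0$ independent of $\nu$, as required.

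The main obstacle is the convergence step: guaranteeing that the Harris constants $\lambda,C_3$ and the stability constant $C_4$ are genuinely \emph{uniform} over the frozen drifts $b_\varepsilon(\cdot,\sigma)$ with $\sigma\in\mathcal{M}_{\alpha_0}(V)$, since only then are $\varepsilon_0$ and $\alpha_2$ independent of $\nu$ and $\mu$. This hinges on a Doeblin-type minorization on $\{V\le R\}$ that is uniform in the drift, which in turn rests on lower bounds for transition densities depending on the coefficients only through $K_1$, $K_2$ and the drift bound $N_1(\alpha_0)$ --- precisely the uniform estimates supplied by \cite[Chapter~8]{BKRSH-book}. Once this uniformity is secured, the Gr\"onwall closure is routine, and the delicate bookkeeping lies entirely in propagating it through the linear comparison.
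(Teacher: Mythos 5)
Your overall architecture is the paper's: a uniform moment bound giving an absorbing ball $\mathcal{M}_{\alpha_0}(V)$ with $\alpha_0$ depending only on $C_1,C_2$, a Harris-theorem spectral gap for the linear semigroup with drift frozen at the stationary solution (this is exactly Lemma~\ref{lem1}, including the point that the minorization constants must depend on the drift only through $K_1$, $N_1(\alpha_0)$ and the Lyapunov constants, secured via the Harnack estimates of \cite[Chapter~8]{BKRSH-book}), then a perturbation estimate and Gr\"onwall with $\varepsilon$ small. Two deviations matter. The first is benign but should be flagged: for existence of $\mu$ you use a Banach contraction for the map $\sigma\mapsto\Phi(\sigma)$, asserting a stability bound $\|\Phi(\sigma_1)-\Phi(\sigma_2)\|_W\le\varepsilon C(\alpha_0)\|\sigma_1-\sigma_2\|_W$ for the linear \emph{stationary} solution map. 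That bound is not among ``the same estimates feeding Theorem~\ref{th1}'' --- the Harnack inequalities of \cite[Chapter~8]{BKRSH-book} give density lower bounds, not weighted-total-variation stability of stationary solutions; you would need results of the type in \cite{BKSH17}, or you would have to derive the stability from Lemma~\ref{lem1} plus a parabolic comparison, which brings you back to the second issue below. The paper avoids all of this by proving existence with Schauder's fixed point theorem on a compact convex set of H\"older densities (Proposition~\ref{pr1} and Remark~\ref{rem1}), which needs only continuity of $\sigma\mapsto\Phi(\sigma)$, no quantitative Lipschitz bound, and no smallness of $\varepsilon$ at the existence stage.

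The genuine gap is your Duhamel estimate $\|\mu_t-\widehat\mu_t\|_W\le\varepsilon C_4\int_0^t e^{-\lambda(t-s)}\|\mu_s-\mu\|_W\,ds$. By duality this requires a \emph{pointwise, weighted, exponentially decaying gradient bound} for the frozen semigroup, of the form $|\nabla \widehat T_r\psi(x)|\lesssim e^{-\lambda r}(1\wedge r)^{-1/2}V(x)^{\gamma-1/2}$ uniformly over $|\psi|\le W$: the source term in the difference equation is a distributional divergence, so the Harris theorem, which only yields decay of $\widehat T_r^{*}$ in weighted total variation, cannot be applied to it. Under the paper's standing assumptions the drift $b_\varepsilon(\cdot,\mu)$ is merely Borel in $x$ and unbounded, so such gradient estimates are not available from the cited sources and are delicate in general. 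The paper's Lemma~\ref{lem2} is precisely the workaround: instead of pointwise gradient decay it proves, via the $L^2$ energy identity for $u(x,s)=T_{t-s}\psi(x)$, only an \emph{integrated} bound $\int_0^t\int|\nabla u|^2\,d\mu_s\,ds\le 8K_1^2N_2(\theta)\theta^2t+\theta$, yielding the weaker comparison $\|\mu_{\tau+t}-\eta_t\|_W\le\varepsilon C(\theta)\bigl(\int_0^t\|\mu_{\tau+s}-\mu\|_W^2\,ds\bigr)^{1/2}$ with no decay in the kernel and a constant growing with the window length. This forces the different closure used in the paper: fix $T$ with $Ne^{-\lambda T}<1/2$, apply Gr\"onwall on $[0,T]$ to get the contraction $\|\mu_{\tau+T}-\mu\|_W\le q\|\mu_\tau-\mu\|_W$, $q<1$, for $\varepsilon<\varepsilon_0$, and iterate over windows to obtain the exponential rate; your final Gr\"onwall computation is fine conditional on your kernel bound, but as written that bound is unjustified, and replacing it by the windowed iteration is the fix. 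The rest of your bookkeeping (uniformity of $\varepsilon_0$ and $\alpha_2$ in $\nu$, with only $\alpha_1$ depending on $\nu(V)$) is consistent with the paper once this substitution is made.
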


\begin{example}\label{ex-main1}\rm
Suppose that  $A=I$ and there exist numbers $m\ge 1$, $\gamma_1>0$, $\gamma_2>0$
and positive functions $N_1$, $N_2$ such that
$$
\langle b_{\varepsilon}(x, \mu), x\rangle\le \gamma_1-\gamma_2|x|^2, \quad
|b_{\varepsilon}(x, \mu)|\le N_1(\alpha)(1+|x|)^m,
$$
$$
|b_{\varepsilon}(x, \mu)-b_{\varepsilon}(x, \sigma)|\le \varepsilon N_2(\alpha)(1+|x|)^m\|(1+|y|)^m
(\mu-\sigma)\|_{TV}
$$
for all $\varepsilon\in[0, 1)$, $\alpha>0$ and $\mu, \sigma\in\mathcal{M}_{\alpha}((1+|x|^{2m+1})$.
Hence all conditions of Theorem \ref{th2} are fulfilled with $V(x)=(1+|x|^2)^{m+1/2})$ and $W(x)=(1+|x|^2)^{m/2}$.
In particular, the listed conditions are fulfilled if
$$
b_{\varepsilon}(x, \mu)=b_0(x)+\varepsilon \int_{\mathbb{R}^d} K(x, y)\,\mu(dy),
$$
where
$$
\langle b_0(x), x\rangle\le c_1-c_2|x|^2, \quad \langle K(x, y), x\rangle \le c_3+c_3|x|^2, \quad c_3<c_2,
$$
$$
|b_0(x)|\le c_4+c_4|x|^m, \quad |K(x, y)|\le c_5(1+|x|^m)(1+|y|^m)
$$
with some positive numbers $c_1, c_2, c_3, c_4$ and $c_5$.
\end{example}

The existence of a  stationary solution $\mu$ under the conditions of Theorem \ref{th2}
will be established in the next section in  Proposition \ref{pr1}. As it will be explained  in
Remark \ref{rem1}, under the conditions of Theorem \ref{th2}
there exists a stationary  solution $\mu$ with
$$
\|V\|_{L^1(\mu)}\le C_1/C_2.
$$
It is the stationary solution that we need in the proof of Theorem \ref{th2}.

We only give the proof of  Theorem \ref{th1}, since the  proof of Theorem \ref{th2}
differs by minor technical details that will be discussed in the course of the proof.

Below for shortening notation and reducing the number of indices we omit the index
$\varepsilon$ and in place of
$b_{\varepsilon}(x, \mu)$ and $L_{\mu, \varepsilon}$ we write $b(x, \mu)$ and $L_{\mu}$.

The plan of the proof is this: 1) we verify  that convergence holds
for solutions $\eta_t$ to the linear equation with the coefficient  $b(x, \mu)$,
in which we substitute the stationary solution~$\mu$,
2)~we obtain an estimate on the distance  $\|\eta_t-\mu_t\|_W$, 3) we prove
that for some $T>0$
one has a contraction $\|\mu_T-\mu\|_W\le q\|\nu-\mu\|_W$ with $q<1$.

\begin{lemma}\label{lem1}
Suppose that we are in the situation of Theorem~{\rm \ref{th1}} or~{\rm\ref{th2}}
with the corresponding $\mu$ and~$\nu$.
Then there exist numbers $N>0$ and $\lambda>0$ such that for every $t>0$ we have
$$
\|\eta_t-\mu\|_W\le Ne^{-\lambda t}\|\mu_0-\mu\|_W,
$$
where $\mu_0\in \mathcal{P}_V(\mathbb{R}^d)$ and $\{\eta_t\}$ is the
solution to the Cauchy problem
$$
\partial_t\eta_t=L_{\mu}^{*}\eta_t, \quad \eta_0=\mu_0.
$$
The numbers $N$ and $\lambda$ depend on $\|V\|_{L^1(\mu)}$ and $\nu$,
and if the condition of Theorem~{\rm\ref{th2}} is fulfilled,
then $N$ and $\lambda$ depend on $C_1$ and $C_2$, but not on $\mu$ and $\nu$.
\end{lemma}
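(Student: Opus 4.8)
The plan is to treat the frozen equation $\partial_t\eta_t=L_{\mu}^{*}\eta_t$ as a genuinely \emph{linear} problem and to establish geometric ergodicity of the associated Markov semigroup by verifying the hypotheses of the Harris theorem in its Hairer--Mattingly weighted form (cf.\ \cite{HM}), using the weight $W$ itself as the Lyapunov function. First I would record that, since $\mu$ is frozen, $L_{\mu}$ is a fixed uniformly elliptic operator whose drift obeys $|b(x,\mu)|\le N_1(\alpha)V(x)^{1/2-\gamma}$ by ${\rm(H_2)}$; this applies because the stationary measure satisfies $\mu\in\mathcal{M}_{\alpha}(V)$, $\mu|_{I_{+}^W}=0$ (automatically, by Proposition~\ref{pr02}(ii)) and $\mu|_{I_0^W}=\nu|_{I_0^W}$. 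Moreover $\mu$ is the invariant measure, $L_{\mu}^{*}\mu=0$. By nondegeneracy the corresponding transition probabilities $P_t(x,dy)$ possess densities and $\eta_t=\int P_t(x,\cdot)\,\mu_0(dx)$; the quantitative bounds I need for these densities are exactly those from \cite[Chapter~8]{BKRSH-book}.

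Next comes the drift (Lyapunov) condition. From ${\rm(H_1)}$ with the fixed $\alpha\ge\|V\|_{L^1(\mu)}$ one gets $L_{\mu}V\le \tilde C-\Lambda V$ with $\tilde C=(1-\delta)C+\Lambda\delta\alpha$ (under the hypotheses of Theorem~\ref{th2} this already reads $L_{\mu}V\le C_1-C_2V$). The key computation transfers this to $W=V^{\gamma}$: since
$$
L_{\mu}W=\gamma V^{\gamma-1}L_{\mu}V+\gamma(\gamma-1)V^{\gamma-2}\langle A\nabla V,\nabla V\rangle
$$
and the second term is nonpositive ($A\ge 0$, $0<\gamma<1$), while $V^{\gamma-1}\le 1$ because $V\ge 1$, one obtains
$$
L_{\mu}W\le \gamma\tilde C-\gamma\Lambda W=:C'-\Lambda'W.
$$
Integrating against $\eta_t$ gives a uniform bound $\sup_t\int W\,d\eta_t<\infty$ and shows the sublevel sets $\{W\le R\}$ are absorbing. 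Because $W\ge 1$, the weighted distance $d_{\beta}(\sigma_1,\sigma_2)=\int(1+\beta W)\,d|\sigma_1-\sigma_2|$ entering the Harris theorem is equivalent to $\|\sigma_1-\sigma_2\|_{W}$, so that a contraction in $d_{\beta}$ is exactly a contraction in $\|\,\cdot\,\|_{W}$.

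For the minorization (small-set) condition, on each ball $\{|x|\le r\}$ uniform ellipticity together with the growth bound on $b(\cdot,\mu)$ yields, via the transition-density estimates of \cite[Chapter~8]{BKRSH-book}, a positive lower bound for $p_{t_0}(x,y)$ uniform in $x$ on the ball and in $y$ on a fixed compact set. Hence for $R$ large enough (larger than $2C'/\Lambda'$) there are $t_0>0$, $\alpha_0\in(0,1)$ and a probability measure $\nu_0$ with $P_{t_0}(x,\cdot)\ge\alpha_0\nu_0$ for all $x$ with $W(x)\le R$. Combining the drift condition for $W$ with this minorization, the Harris theorem produces $\bar q\in(0,1)$ and $\beta>0$ with $d_{\beta}(P_{t_0}^{*}\sigma_1,P_{t_0}^{*}\sigma_2)\le\bar q\,d_{\beta}(\sigma_1,\sigma_2)$; iterating and using invariance of $\mu$ gives $\|\eta_{nt_0}-\mu\|_{W}\le N_0\bar q^{\,n}\|\mu_0-\mu\|_{W}$, and interpolation over the intervals $[nt_0,(n+1)t_0]$ (where the drift bound controls the growth of $\|\,\cdot\,\|_{W}$ in bounded time) yields the asserted $\|\eta_t-\mu\|_{W}\le Ne^{-\lambda t}\|\mu_0-\mu\|_{W}$. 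Tracking constants, $N$ and $\lambda$ depend only on $C'$, $\Lambda'$, $R$, $t_0$, $\alpha_0$, hence on $\|V\|_{L^1(\mu)}$ and on $\nu$ through $\alpha$, $N_1(\alpha)$ and $\mu|_{I_0^W}=\nu|_{I_0^W}$; under Theorem~\ref{th2} the drift constants are the absolute $C_1,C_2$ and the bound on $b$ is $\alpha$-independent, so $N,\lambda$ then depend on $C_1,C_2$ only.

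The main obstacle I anticipate is the minorization step: extracting from the Chapter~8 estimates a lower bound on the transition density that is \emph{genuinely uniform} over $x$ in a sublevel set rather than merely pointwise, given that the drift is only controlled by a power of $V$ and may be irregular. By contrast, the Lyapunov computation for $W$ and the discrete-to-continuous interpolation are routine once the sign of the Hessian correction term and the bound $V\ge 1$ are exploited.
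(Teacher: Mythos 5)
Your skeleton is the same as the paper's: freeze $\mu$, verify the Hairer--Mattingly form of Harris's theorem with Lyapunov function $W=V^\gamma$ (the chain-rule computation $L_\mu W=\gamma V^{\gamma-1}L_\mu V+\gamma(\gamma-1)V^{\gamma-2}\langle A\nabla V,\nabla V\rangle\le\gamma\tilde C-\gamma\Lambda W$ is exactly the paper's, and your handling of intermediate times via the bound $T_tW\le 2W$ on $[0,\tau]$ matches as well). But the step you defer --- the minorization --- is where the entire content of the lemma lies, and your proposal leaves it genuinely open. Moreover, you slightly misidentify the difficulty: uniformity of the lower bound in $x$ over a ball is not the real problem (the transition density $\varrho(x,y,\tau)$ is positive and continuous, so $m_1(R)=\min_{x\in B(0,R)}\varrho(x,0,\tau/2)>0$ by compactness, and the Harnack inequality of \cite[Theorem~8.2.1]{BKRSH-book} already gives $\varrho(x,y,\tau)\ge \varrho(x,0,\tau/2)e^{-K(\tau)(1+Q^2(|y|)+|y|^2)}$ uniformly for $x\in B(0,R)$, with $K(\tau)$ depending only on $A$, $\tau$, $d$). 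The real problem is that $m_1(R)$ obtained this way depends on the transition density of $L_\mu$, hence on the unknown stationary measure $\mu$ in an uncontrolled fashion. With such an $m_1$ you would still prove exponential convergence for each fixed $\mu$, but you would \emph{not} prove the dependence claim in the statement --- that $N$ and $\lambda$ depend only on $\|V\|_{L^1(\mu)}$ and $\nu$ (only on $C_1,C_2$ under Theorem~\ref{th2}) --- and that claim is what the main theorems consume when choosing $\varepsilon_0$ (and what makes $\varepsilon_0$, $\alpha_2$ independent of $\nu$, $\mu$ in Theorem~\ref{th2}). Your final sentence ``tracking constants, $N$ and $\lambda$ depend only on $C',\Lambda',R,t_0,\alpha_0$'' is circular at $\alpha_0$: nothing in your argument shows $\alpha_0$ is controlled by the admissible data.

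The paper closes this gap with a short quantitative argument you would need to reproduce. Take $\psi\in C_0^\infty(\mathbb{R}^d)$ with $\psi=1$ on $B(0,2R)$, $\psi=0$ off $B(0,3R)$; the integral identity gives, for $x\in B(0,R)$,
$$
\int_{\mathbb{R}^d}\psi(y)\varrho(x,y,t)\,dy\ \ge\ 1-t\sup_y|L\psi(y)|,
$$
and $\sup_y|L\psi|$ is controlled by $K_1$ and the drift bound $N_1(\alpha)V^{1/2-\gamma}$ on $B(0,3R)$, i.e.\ by admissible data only. Choosing $t=\tau/4$ small enough that the right-hand side exceeds $1/2$ forces $\sup_{y\in B(0,3R)}\varrho(x,y,\tau/4)\ge 1/2$ for every $x\in B(0,R)$; a second application of Harnack's inequality (\cite[Theorem~8.1.3]{BKRSH-book}) then converts this into $1/2\le C\,\varrho(x,0,\tau/2)$ with $C$ depending only on $R$, the drift-growth function $Q$, and $\tau$. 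This replaces the $\mu$-dependent $m_1(R)$ by a constant $m(R)$ depending only on $N_1$, $\Lambda_1$, $\Lambda$ --- equivalently on $\|V\|_{L^1(\mu)}$ and $\nu$, and under the hypotheses of Theorem~\ref{th2} only on $C_1$, $C_2$. Without this cutoff-plus-double-Harnack step (or some substitute achieving the same quantitative control), your proof establishes a weaker statement than the lemma asserts.
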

\begin{proof}
Let $\{T_t\}_{t\ge0}$ be the Markov semigroup on $L^1(\mu)$  with  generator
$$
L\varphi(x)={\rm trace}(A(x)D^2\varphi(x))+\langle b(x, \mu), \nabla\varphi(x)\rangle
$$
on $C_0^\infty(\mathbb{R}^d)$. This semigroup exists and is unique
under our assumptions, see \cite[Theorem~5.2.2, Proposition~5.2.5 and Example~5.5.1]{BKRSH-book}.
Moreover, $\eta_t=T_t^{*}\mu_0$. By  \cite[Theorem 6.4.7]{BKRSH-book}
there exists a positive continuous function $\varrho(x, y, t)$
such that
$$
T_tf(x)=\int_{\mathbb{R}^d}\varrho(x, y, t)f(y)\,dy.
$$
Moreover, $\varrho(x, y, t)$  satisfies
the Cauchy problem $\partial_t\varrho=L^{*}\varrho$ with respect to $(t, y)$
with the initial condition $\delta_x$.
Set $\alpha=\|V\|_{L^1(\mu)}$ (or $\alpha=C_1/C_2$ in the case of
 Theorem~\ref{th2})
and $\Lambda_1=(1-\delta)C+\Lambda\delta\alpha$ (or $\Lambda_1=C_1$
and $\Lambda=C_2$ in the case of Theorem~\ref{th2}).
Recall that $LV\le \Lambda_1-\Lambda V$.
Since
$$LW=\gamma V^{\gamma-1}LV
+\gamma(\gamma-1)V^{\gamma-2}|\nabla V|^2\le \gamma\Lambda_1-\gamma\Lambda W,
$$
we have
$$
\partial_t(We^{tH_3})+L(We^{tH_3})\le \gamma\Lambda_1e^{t\Lambda}.
$$
By \cite[Theorem 7.1.1]{BKRSH-book}
$$
\int_{\mathbb{R}^d}W(y)\varrho(x, y, t)\,dy\le e^{-\Lambda t}W(x)+\gamma
\Lambda_1\Lambda^{-1}(1-e^{-\Lambda t}).
$$
Let us fix a number $\tau>0$ such that $\gamma \Lambda_1\Lambda^{-1}
(1-e^{-\Lambda \tau})<1$.
Note that $\tau$ depends on $C_1$ and $C_2$ in the case of Theorem~\ref{th2}.
Then
$$
T_{\tau}W(x)\le e^{-\Lambda\tau}W(x)+1.
$$
The function $Q(r)=\max_{|x|\le 2r}V(x)^{\frac{1}{2}-\gamma}$ is continuous
and increasing on $[0, +\infty)$.
The condition $|b(x, \mu)|\le N_1(\alpha)V(x)^{\frac{1}{2}-\gamma}\le
N_1(\alpha)Q(|x|/2)$
and Harnack's inequality (see \cite[Theorem 8.2.1]{BKRSH-book})
imply that, for every $x\in B(0, R)$ and $y\in\mathbb{R}^d$, we have
$$
\varrho(x, y, \tau)\ge \varrho(x, 0, \tau/2)e^{-K(\tau)(1+Q^2(|y|)+|y|^2)}
\ge m_1(R)e^{-K(\tau)(1+Q^2(|y|)+|y|^2)},
$$
where $m_1(R)=\min_{x\in B(0, R)}\varrho(x, 0, \tau/2)$.
The number $K(\tau)$ depends only on the matrix $A$, $\tau$ and the dimension~$d$,
and there is an explicit expression for $K(\tau)$ in \cite[Theorem~8.2.1]{BKRSH-book}.
Note that so far $m_1$ depends in a very complicated way on the stationary  measure $\mu$,
since $L$ depends on $\mu$ and $\varrho$ defines the operator $L$. We would like
to have dependence  only on $N_1$,
$\Lambda_1$ and $\Lambda$, which in turn depend on $\alpha=\|V\|_{L^1(\mu)}$
and $\nu$, and in the case of Theorem \ref{th2} depend
on $C_1$ and $C_2$ and are independent of $\nu$ and $\mu$.
Thus, we have to estimate $m_1$ from below. Let $\psi\in C_0^{\infty}(\mathbb{R}^d)$,
$\psi(x)=1$ if $y\in B(0, 2R)$ and $\psi(y)=0$ if $y\notin B(0, 3R)$.
Let $x\in B(0, R)$. Then
$$
\int_{\mathbb{R}^d}\psi(y)\varrho(x, y, t)\,dy=\psi(x)
+\int_0^t\int_{\mathbb{R}^d}L\psi(y)\varrho(x,y,s)\,dy\,ds.
$$
Therefore,
$$
\int_{\mathbb{R}^d}\psi(y)\varrho(x, y, t)\,dy\ge 1-t\sup_{y}|L\psi(y)|.
$$
Choosing  $t$ so small  that the right-hand side is estimated from below by $1/2$, we obtain
$$
\sup_{y\in B(0, 3R)}\varrho(x, y, t)\ge 1/2
\quad \forall\, x\in B(0,R).
$$
Decreasing  $\tau$ if necessary, we can assume  that $t=\tau/4$.
Applying again Harnack's inequality from \cite[Theorem 8.1.3]{BKRSH-book}
we obtain the estimate
$$
1/2\le C\varrho(x, 0, \tau/2),
$$
where $C$ depends only on $R$, $Q$, and $\tau$. Thus,
$$
\varrho(x, y, \tau)\ge m(R)e^{-K(\tau)(1+Q^2(|y|)+|y|^2)}
\quad \forall\, x\in B(0,R)
$$
where $m(R)$ and $\tau$ depend only on $N_1$, $\Lambda_1$ and $\Lambda$.

Let us  now recall the Harris ergodic theorem (see \cite{HM}).
Let $\mathcal{P}(\,\cdot\, , \, \cdot \,)$ be a Markov transition
kernel  defined on a measurable space~$(X,\mathcal{B})$, i.e.,
for each $x\in X$, the function $B\mapsto \mathcal{P}(x,B)$ is a
probability measure on~$\mathcal{B}$,
and, for each $B\in \mathcal{B}$, the function $x\mapsto
\mathcal{P}(x,B)$ is $\mathcal{B}$-measurable.
The transition kernel defines operators on functions and
measures by setting
$$
\mathcal{P} f(x)=\int_X f(y)\, \mathcal{P}(x,dy),
$$
$$
\mathcal{P}\sigma(B)=\int_X \mathcal{P}(x,B)\, \sigma(dx).
$$

Let us assume that

\, (i) \, there exist a function $U\colon X\to [0,+\infty)$ and numbers  $\delta\in(0, 1)$ and $K$
such that
$$
\mathcal{P}U(x)\le \delta U(x)+K
\quad \forall\, x\in X;
$$

\, (ii) \, there exist a number $q\in(0, 1)$ and a probability
measure $\sigma$ such that
$$
\inf_{x\colon U(x)\le R}\mathcal{P}(x, \,\cdot\,)\ge q\sigma(\,\cdot\,),
$$
for some $R>2K/(1-\delta)$.

According to  \cite[Theorem 1.3]{HM}, there exist numbers
 $\beta_0\in(0, 1)$ and $\beta>0$ such that
$$
\|\mathcal{P}\mu_1-\mathcal{P}\mu_2\|_{1+\beta U}\le
\beta_0\|\mu_1-\mu_2\|_{1+\beta U}
$$
for every pair of probability measures $\mu_1$ and $\mu_2$ on $X$.
From this estimate one can derive the bound
$$
\|\mathcal{P}^n\nu-\mu\|_{1+\beta U}\le \beta_0^n\|\nu-\mu\|_{1+\beta U}
$$
for the stationary measure $\mu$ (that is, $\mathcal{P}\mu=\mu$) and
every measure $\nu$.

We can now apply this assertion to the Markov transition
kernel $\varrho(x, y, \tau)\,dy$
with $U(x)=W(x)$, $K=1$, $\delta=e^{-\Lambda\tau}$ and
$$
q\sigma(dy)=m(R)e^{-K(\tau)(1+Q^2(|y|)+|y|^2)}\, dy,
$$
 where the number $R$ is larger than $2/(1-e^{-\Lambda\tau})$.
Therefore, we have
\begin{equation}\label{est-st}
\|T_{n\tau}^{*}\mu_0-\mu\|_{W}\le N_1\beta_0^n\|\nu-\mu\|_{W}.
\end{equation}
Note that for every $\varphi\in C_0^{\infty}(\mathbb{R}^d)$ such
that $|\varphi(x)|\le W(x)$ for all~$x$ we have
$$
|T_t\varphi|\le T_t|\varphi|\le T_tW\le 2W \quad \forall t\in(0, \tau).
$$
Hence
$$
\int_{\mathbb{R}^d}\varphi\,d(T_t^{*}\mu_0 -\mu)
=\int_{\mathbb{R}^d}T_t\varphi\,d(\mu_0-\mu)\le
2\|\mu_0-\mu\|_{W}
$$
and we obtain the estimate
$$
\sup_{t\in(0, \tau)}\|T_t^{*}\mu_0 -\mu\|_{W}\le 2\|\mu_0 -\mu\|_{W}.
$$
Summing this estimate and (\ref{est-st}), we complete the proof.
\end{proof}

Suppose that as above $\mu_t$ is the solution to the Cauchy problem  (\ref{eq1A}) with initial data~$\nu$.
Before  estimating the distance  between $\mu_t$ and $\eta_t$ we estimate
$\|V\|_{L^1(\mu_t)}$.
Now let $\delta<1$. With the aid of condition ${\rm (H_1)}$ we deduce that
$$
\int_{\mathbb{R}^d}V\,d\mu_t\le \int_{\mathbb{R}^d}V\,d\nu+(1-\delta)Ct
-(1-\delta)\int_0^t\int_{\mathbb{R}^d}V\,d\mu_s\,ds.
$$
If $\delta=1$, then $\|V\|_{L^1(\mu_t)}\le \|V\|_{L^1(\nu)}$.
If $\delta<1$, then by Gronwall's inequality we obtain
$$
\int_{\mathbb{R}^d}V\,d\mu_t\le \Bigl(\int_{\mathbb{R}^d}V\,d\nu
-\frac{C}{\Lambda}\Bigr)e^{-\Lambda(1-\delta)t}+\frac{C}{\Lambda}.
$$
If $\delta<1$, then, starting from some $\tau_0>0$, we can assume  that
$\|V\|_{L^1(\mu_t)}\le C\Lambda^{-1}+1$ for all $t\ge \tau_0$. Since we are interested in convergence
as $t\to\infty$,  we can always consider the Cauchy problem for $t>\tau_0$ and
with the initial condition $\mu_{\tau_0}$ in place of $\nu$. Hence we assume further that
for $\delta=1$ we have  $\|V\|_{L^1(\mu_t)}\le\|V\|_{L^1(\nu)}$,
and for $\delta<1$ we have $\|V\|_{L^1(\mu_t)}\le C\Lambda^{-1}+1$.
Let
$$
\theta=\max\{\|V\|_{L^1(\nu)}, C\Lambda^{-1}+1, \|V\|_{\mu}\},
$$
and let $\theta=1+C_1/C_2$ in the conditions of Theorem \ref{th2} .
We observe that by the uniqueness of solutions to the Cauchy problem
(\ref{eq1A}) under our assumptions about the coefficients (see Proposition \ref{pr2})
the solution $\mu_{\tau+t}$ to the Cauchy problem with the initial condition $\nu$ coincides for $t\ge 0$
with the solution $\mu_t$ to the Cauchy with the initial condition~$\mu_{\tau}$.

\begin{lemma}\label{lem2}
Let $\tau\ge \tau_0$.
Let $\{\mu_t\}$ be the solution to the Cauchy problem {\rm (\ref{eq1A})}
and let $\{\eta_t\}$ be the
solution to the Cauchy problem $\partial_t\eta_t=L_{\mu}^{*}\eta_t$,
$\eta_0=\mu_{\tau}$.
Then
$$
\|\mu_{\tau+t}-\eta_t\|_W\le \varepsilon C(\theta)
\biggl(\int_0^t\|\mu_{\tau+t}-\mu\|_{W}^2\,dt\biggr)^{1/2}, \quad C(\theta)=\theta\bigl(8K_1^2N_2(\theta)
\theta^2t+\theta\bigr).
$$
\end{lemma}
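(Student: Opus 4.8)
The plan is to treat $\eta$ as the linearization of the nonlinear flow along the frozen stationary drift $b(\cdot,\mu)$, and to control the discrepancy through the variation-of-constants formula for the semigroup generated by $L_\mu$, estimating the drift mismatch by condition $(\mathrm{H_3})$.

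First I would subtract the two defining integral identities. Writing $\sigma_s=\mu_{\tau+s}-\eta_s$ and recalling that $\{\mu_{\tau+s}\}$ solves $\partial_s\mu_{\tau+s}=L_{\mu_{\tau+s}}^{*}\mu_{\tau+s}$ while $\{\eta_s\}$ solves $\partial_s\eta_s=L_\mu^{*}\eta_s$, both with the common initial datum $\mu_\tau$, the identity $L_{\mu_{\tau+s}}\varphi-L_\mu\varphi=\langle b(\cdot,\mu_{\tau+s})-b(\cdot,\mu),\nabla\varphi\rangle$ gives, for every $\varphi\in C_0^\infty(\mathbb{R}^d)$,
$$
\int_{\mathbb{R}^d}\varphi\,d\sigma_t=\int_0^t\int_{\mathbb{R}^d}L_\mu\varphi\,d\sigma_s\,ds+\int_0^t\int_{\mathbb{R}^d}\langle b(x,\mu_{\tau+s})-b(x,\mu),\nabla\varphi(x)\rangle\,\mu_{\tau+s}(dx)\,ds .
$$
Thus $\sigma_t$ solves the linear Fokker--Planck--Kolmogorov equation with operator $L_\mu$, zero initial condition, and the source $-\mathrm{div}\bigl((b(\cdot,\mu_{\tau+s})-b(\cdot,\mu))\mu_{\tau+s}\bigr)$.

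Next I would use the Markov semigroup $\{T_r\}$ generated by $L_\mu$ from Lemma~\ref{lem1}, for which $\mu$ is invariant since $L_\mu^{*}\mu=0$. Duhamel's principle together with $\sigma_0=0$ yields, for every $\varphi$ with $|\varphi|\le W$,
$$
\int_{\mathbb{R}^d}\varphi\,d\sigma_t=\int_0^t\int_{\mathbb{R}^d}\langle b(x,\mu_{\tau+s})-b(x,\mu),\nabla(T_{t-s}\varphi)(x)\rangle\,\mu_{\tau+s}(dx)\,ds .
$$
At this point condition $(\mathrm{H_3})$ applies: indeed $\mu_{\tau+s},\mu\in\mathcal{M}_\theta(V)$ by the moment bound established just before the lemma, $\mu_{\tau+s}|_{I_{+}^W}=\mu|_{I_{+}^W}=0$ by Proposition~\ref{pr02}, and $\mu_{\tau+s}|_{I_0^W}=\nu|_{I_0^W}=\mu|_{I_0^W}$ by Proposition~\ref{pr00} (under the hypotheses of Theorem~\ref{th2} these constraints are vacuous). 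Hence the integrand is dominated by $\varepsilon N_2(\theta)V(x)^{1/2-\gamma}\|\mu_{\tau+s}-\mu\|_W\,|\nabla(T_{t-s}\varphi)(x)|$.

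The main step, and the principal obstacle, is a gradient estimate for the frozen semigroup that is uniform in $\mu$: using nondegeneracy $K_1^{-1}I\le A\le K_1I$ together with the drift bound $(\mathrm{H_2})$ and the heat-kernel and Harnack estimates of \cite[Chapter~8]{BKRSH-book} already exploited in Lemma~\ref{lem1}, one controls $\int V^{1/2-\gamma}|\nabla(T_{t-s}\varphi)|\,d\mu_{\tau+s}$ for $|\varphi|\le W$, the constant being governed by $K_1$ and by the moment $\int V\,d\mu_{\tau+s}\le\theta$ alone. The delicate point is to keep this constant independent of the finer structure of the stationary measure $\mu$ (the same uniformity issue faced in Lemma~\ref{lem1}) and to absorb the weight $V^{1/2-\gamma}$ through $\int V\,d\mu_{\tau+s}\le\theta$. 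Substituting this bound and applying the Cauchy--Schwarz inequality in the time variable,
$$
\int_0^t(\cdots)\,\|\mu_{\tau+s}-\mu\|_W\,ds\le\Bigl(\int_0^t(\cdots)^2\,ds\Bigr)^{1/2}\Bigl(\int_0^t\|\mu_{\tau+s}-\mu\|_W^2\,ds\Bigr)^{1/2},
$$
I would collect the constants into $C(\theta)=\theta\bigl(8K_1^2N_2(\theta)\theta^2t+\theta\bigr)$, the factor $K_1^2$ arising from the ellipticity constant in the gradient estimate and the remaining powers of $\theta$ from the moment bounds. The $L^2$-in-time shape of the right-hand side is arranged deliberately, so that, combined with the exponential bound of Lemma~\ref{lem1}, it will feed a Grönwall-type bootstrap for $t\mapsto\|\mu_{\tau+t}-\mu\|_W$ in the proof of Theorem~\ref{th1}.
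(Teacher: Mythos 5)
Your skeleton matches the paper up to the crux: subtracting the two equations, the Duhamel identity
$$
\int_{\mathbb{R}^d}\psi\,d(\mu_{\tau+t}-\eta_t)
=\int_0^{t}\int_{\mathbb{R}^d}\langle b(x,\mu_{\tau+s})-b(x,\mu),\nabla_x u\rangle\,\mu_{\tau+s}(dx)\,ds,
\qquad u(x,s)=T_{t-s}\psi(x),
$$
the verification that $({\rm H_3})$ applies with $\alpha=\theta$, and a final Cauchy--Schwarz in time are all exactly the paper's steps. But at the step you yourself flag as ``the main step, and the principal obstacle'' --- bounding $\nabla u$ --- your proposal has a genuine gap. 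You propose a gradient estimate for the frozen semigroup, uniform in $\mu$, derived from ``heat-kernel and Harnack estimates of Chapter~8.'' Those Harnack estimates give \emph{lower bounds on the transition density} $\varrho(x,y,\tau)$ and are used in Lemma~\ref{lem1} solely for the minorization condition of the Harris theorem; they yield no derivative bounds on $T_t\psi$. Under the standing assumptions the drift $b(\cdot,\mu)$ is merely Borel with the growth bound $({\rm H_2})$, so no pointwise gradient estimate of the kind you invoke (as in Remark~\ref{remK}, which needs dissipativity/Lipschitz structure) is available, and the weighted bound on $\int V^{1/2-\gamma}|\nabla(T_{t-s}\varphi)|\,d\mu_{\tau+s}$ with constants depending only on $K_1$ and $\theta$ is asserted, not proved. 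You have in effect postponed the entire content of the lemma to an unproven estimate whose natural proofs would require regularity the coefficients do not have.

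The paper avoids any pointwise gradient bound by an energy (Caccioppoli-type) argument: it tests the nonlinear equation satisfied by $\widetilde{\mu_s}=\mu_{\tau+s}$ against $u^2$, which gives
$$
\int_{\mathbb{R}^d}\psi^2\,d\widetilde{\mu_t}-\int_{\mathbb{R}^d}u(x,0)^2\,d\mu_\tau
=\int_0^{t}\int_{\mathbb{R}^d}\Bigl[2|\sqrt{A}\nabla u|^2
+2\langle b(x,\widetilde{\mu_s})-b(x,\mu),\nabla_x u\rangle u\Bigr]\,d\widetilde{\mu_s}\,ds,
$$
then uses $|u|\le 2W$, $({\rm H_3})$ with $\|\widetilde{\mu_s}-\mu\|_W\le 2\theta^{1/2}$, uniform ellipticity $K_1^{-1}I\le A$, and Young's inequality to absorb the cross term, obtaining the \emph{integrated} bound $\int_0^t\int|\nabla u|^2\,d\widetilde{\mu_s}\,ds\le 8K_1^2N_2(\theta)\theta^2 t+\theta$; this is precisely where $C(\theta)$ and the factor $K_1^2$ come from, with no uniformity-in-$\mu$ issue to resolve. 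The final step is then Cauchy--Schwarz with respect to $d\widetilde{\mu_s}\,ds$ jointly (pairing $|b(x,\widetilde{\mu_s})-b(x,\mu)|\le \varepsilon N_2(\theta)V^{1/2-\gamma}\|\widetilde{\mu_s}-\mu\|_W$ against $|\nabla u|$ in $L^2(\widetilde{\mu_s}\,ds)$), not merely in the time variable. To repair your proof, replace the claimed semigroup gradient estimate by this energy identity; only an $L^2(d\widetilde{\mu_s}\,ds)$ bound on $\nabla u$ is needed, and it is obtainable from the equation itself.
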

\begin{proof}
Let $\widetilde{\mu_t}=\mu_{\tau+t}$.
Let $\{T_t\}_{t\ge0}$ be the semigroup with the generator $L$ from
the previous proof. Set
$$
u(x, t)=T_{s-t}\psi(x),
$$
where
$\psi\in C_0^{\infty}(\mathbb{R}^d)$ and $|\psi(x)|\le W(x)$ for all~$x$.
Then
$$
\int_{\mathbb{R}^d}\psi\,d(\widetilde{\mu_t}-\eta_t)=
\int_0^{t}\int_{\mathbb{R}^d} \langle b(x, \widetilde{\mu_t})-b(x, \mu),
\nabla_x u\rangle\,\widetilde{\mu_t}(dx)\,dt.
$$
We need a bound on $|\nabla_x u|$. We have
\begin{multline*}
\int_{\mathbb{R}^d} \psi(x)^2\,\widetilde{\mu_t}(dx)-\int_{\mathbb{R}^d}
u(x, 0)^2\,\mu_{\tau}(dx)
\\
=
\int_0^{t}\int_{\mathbb{R}^d} \Bigl[2|\sqrt{A}\nabla u(x)|^2
+2\langle b(x, \widetilde{\mu_t})-b(x, \mu), \nabla_x u(x)\rangle u(x)
\Bigr]\,\widetilde{\mu_t}(dx)\,dt.
\end{multline*}
Recall that $|u|\le 2W$ and
$$
|b(x, \widetilde{\mu_t})-b(x, \mu)|\le
\varepsilon N_2(\theta)V(x)^{\frac{1}{2}-\gamma}\|\widetilde{\mu_t}-\mu\|_W,
$$
where $\|\widetilde{\mu_t}-\mu\|_W\le 2\theta^{1/2}$.
Since
$$
2|\langle b(x, \widetilde{\mu_t})-b(x, \mu), \nabla_x u(x)\rangle u(x)|\le
4K_1N_2(\theta)^2\theta V(x)+2^{-1}K_1^{-1}|\nabla_x u|^2,
$$
we obtain
$$
\int_0^{t}\int_{\mathbb{R}^d} |\nabla u|^2\,d\widetilde{\mu_t}\,dt\le
8K_1^2N_2(\theta)\theta^2t+\theta.
$$
Let us observe that
\begin{multline*}
\int_0^{t}\int_{\mathbb{R}^d} \langle b(x, \widetilde{\mu_t})-b(x, \mu),
\nabla_x u(x)\rangle\,\widetilde{\mu_t}(dx)\,dt
\\
\le
\varepsilon\theta\Bigl(8K_1^2N_2(\theta)\theta^2t+\theta\Bigr)
\biggl(\int_0^t\|\widetilde{\mu_t}-\mu\|_W^2\,dt\biggr)^{1/2}.
\end{multline*}
We obtain
$$
\int_{\mathbb{R}^d}\psi\,d(\widetilde{\mu_t}-\eta_t)\le
\varepsilon\theta\Bigl(8K_1^2N_2(\theta)\theta^2t+\theta\Bigr)
\biggl(\int_0^t\|\widetilde{\mu_t}-\mu\|_W^2\,dt\biggr)^{1/2}.
$$
Since $|\psi|\le W$,
we have
$$
\|\widetilde{\mu_t}-\eta_t\|_W\le \varepsilon\theta\Bigl(8K_1^2N_2(\theta)\theta^2t+\theta\Bigr)
\biggl(\int_0^t\|\widetilde{\mu_t}-\mu\|_W^2\,dt\biggr)^{1/2}.
$$
which completes the proof.
\end{proof}

We are now ready to prove our main theorems.

\begin{proof}[Proof of Theorem~\ref{th1}]
Let $\tau\ge \tau_0$. We recall that $\mu_{\tau+t}$ with $t>0$ solves the Cauchy problem (\ref{eq1A})
with the initial condition $\mu_{\tau}$. Let $\eta_t$ be the solution to the
linear Cauchy problem $\partial_t\eta_t=L_{\mu}^{*}\eta_t$,
$\eta_0=\mu_{\tau}$.
Using Lemma \ref{lem1} and Lemma \ref{lem2} we obtain
\begin{align*}
\|\mu_{\tau+t}-\mu\|_W
&\le \|\eta_t-\mu\|_W+\|\mu_{\tau+t}-\eta_t\|_W
\\
&\le
Ne^{-\lambda t}\|\mu_{\tau}-\mu\|_W+
\varepsilon\theta\Bigl(8K_1^2N_2(\theta)\theta^2t+\theta\Bigr)
\biggl(\int_0^t\|\mu_{\tau+t}-\mu\|_W^2\,dt\biggr)^{1/2}.
\end{align*}
Let $T>0$ be such that $Ne^{-\lambda T}<1/2$. Set
$$
M=\theta\Bigl(8K_1^2N_2(\theta)\theta^2T+\theta\Bigr).
$$
For any $t\in[0, T]$ we have
$$
\|\mu_{\tau+t}-\mu\|_W\le N\|\mu_{\tau}-\mu\|_W+\varepsilon M
\biggl(\int_0^t\|\mu_{\tau+t}-\mu\|_W^2\,dt\biggr)^{1/2}.
$$
By Gronwall's inequality
\begin{equation}\label{est-st2}
\|\mu_{\tau+t}-\mu\|_W\le 2N e^{2\varepsilon^2M^2t}\|\mu_{\tau}-\mu\|_W\le
2N e^{2\varepsilon^2M^2T}\|\mu_{\tau}-\mu\|_W.
\end{equation}
Therefore,
$$
\|\mu_{\tau+T}-\mu\|_W\le \Bigl(\frac{1}{2}+2N\varepsilon MT^{1/2}
e^{2\varepsilon^2M^2T^2}\Bigl)\|\mu_{\tau}-\mu\|_W.
$$
Let us take $\varepsilon_0>0$ such that
$$
q=\frac{1}{2}+2N \varepsilon MT^{1/2}e^{2\varepsilon^2M^2T^2}<1
\quad \forall \varepsilon\in(0, \varepsilon_0).
$$
Then
$$
\|\mu_{\tau+T}-\mu\|_W\le q\|\mu_{\tau}-\mu\|_W \quad \forall \tau\ge\tau_0.
$$
We have
$\|\mu_{\tau_0+nT}-\mu\|_W\le \theta^n\|\mu_{\tau_0}-\mu\|_W$.
Using this estimate and (\ref{est-st2}), we obtain
$$
\|\mu_t-\mu\|_W\le \alpha_1e^{-\alpha_2 t} \quad \forall t\ge 0,
$$
which completes the proof.
\end{proof}

It is seen from the  proof above  that $\varepsilon_0$ depends on the quantities $N$ and
$\lambda$ from Lemma \ref{lem1}
and on the number $\theta$ defined before Lemma~\ref{lem2}. Thus,
under the conditions of Theorem~\ref{th2} the number
$\varepsilon_0$ depends only on $C_1$, $C_2$ and the functions $N_1$ and~$N_2$.

\begin{remark}\label{r3.7}
\rm
Let us discuss in more detail Example \ref{ex1.1} from the introduction.
Let $d=1$, $A=1$ and
$$
b_{\varepsilon}(x, \mu)=-x+\varepsilon B(\mu), \quad B(\mu)=\int x\,\mu(dx), \quad \varepsilon\ge 0.
$$
In the case where $0\le\varepsilon<1$, the standard Gaussian measure $\mu$ is the unique probability
solution to the stationary equation. Let us show that for every initial
condition $\nu$ with a finite first moment the measures $\mu_t$ from the solution
  to the Cauchy problem converge to~$\mu$.
The justification repeats the main steps of the proof of Theorem~\ref{th1}.
We observe that
$$
\frac{d}{dt}B(\mu_t)=-(1-\varepsilon)B(\mu_t), \quad B(\mu_{t+s})=e^{-(1-\varepsilon)t}B(\mu_{s}).
$$
Moreover,
$$
\int|x|^2\,d\mu_t\le \int|x|^2\,\nu(dx) +(1-\varepsilon)^{-1}\int|x|\, \nu(dx).
$$
Let $\tau>0$ and let $\{\eta_t\}$ be the solution to the Cauchy problem
$$
\partial_t\eta_t=\eta_t''+(x\eta_t)', \quad \eta_0=\mu_{\tau}.
$$
By Lemma \ref{lem1} with $W(x)=(1+|x|^2)^{1/2}$ and $V(x)=1+|x|^2$ we have
$$
\|\eta_t-\mu\|_{W}\le C_1e^{-C_2\tau}\|\mu_{\tau}-\mu\|_{W}\le C_3e^{-C_2\tau}.
$$
Since
$$b(x, \mu_{t+\tau})-b(x, \mu)=\varepsilon B(\mu_{t+\tau})=\varepsilon e^{-(1-\varepsilon)t}B(\mu_{\tau}),$$
repeating the reasoning from Lemma \ref{lem2} one can readily obtain
the bound
$$
\|\mu_{t+\tau}-\eta_t\|_{W}\le C(\varepsilon)B(\mu_{\tau})\le C_4e^{-(1-\varepsilon)\tau}, \quad t\in[0, \tau].
$$
Combining the obtained estimates for $t=\tau$, we conclude that
$$
\|\mu_{2\tau}-\mu\|_{W}\le \|\mu_{2\tau}-\eta_{\tau}\|_{W}+\|\eta_{\tau}-\mu\|_{W}\le
C_4e^{-(1-\varepsilon)\tau}+C_3e^{-C_2\tau}.
$$
Thus,
$$
\|\mu_{t}-\mu\|_{W}\le \alpha_1e^{-\alpha_2 t}.
$$
Note that in this case the specific form of $b_{\varepsilon}(x, \mu)$ has enabled us to use
the exponential convergence of $B(\mu_t)$ to zero in place of Condition~(H3) (Lipschitzness in~$\mu$).

The case where  $\varepsilon=1$ has been considered in the introduction.
In addition, it is covered by Theorem~\ref{th1}.

We now consider the case where $\varepsilon>1$.
Then the unique stationary solution is the standard Gaussian measure and the
solutions to the  Cauchy problem converge in total variation norm to this measure  only
if the initial condition $\nu$ has zero mean. In case of a nonzero mean of $\nu$ it is
 easy to see that there is no  convergence in total variation norm with weight
 $(1+|x|)$, because in this case the means of  $\mu_t$ must converge to the mean
of the stationary distribution, while in our example with $\varepsilon>1$ the mean of $\mu_t$
equals $B(\nu)e^{(\varepsilon-1)t}$ and tends to infinity.
However, one still might hope  that there is convergence in some weaker sense,
for  example, weak convergence. However, weak convergence also fails in case of a nonzero mean of~$\nu$.
Indeed, let $\varphi\in C_0^{\infty}(\mathbb{R})$. Then
$$
\int\varphi\,d\mu_t-\int\varphi\,d\nu=\int_0^t
\Bigl[\int(\varphi''-x\varphi')\,d\mu_s+
B(\nu)e^{(\varepsilon-1)s}\int\varphi'\,d\mu_s\Bigr]\,ds.
$$
If $\mu_t$ converges to the stationary distribution  $\mu$ in the sense
of weak convergence, then
all integrals with $\varphi''$, $x\varphi'$, $\varphi'$ and $\varphi$
converge to some constants.
Let $$\int\varphi'\,d\mu\neq 0.$$
 Then the right-hand side of the integral equality above is unbounded as
 $t\to\infty$, which contradicts the boundedness of the left-hand side.
\end{remark}

\begin{remark}\label{remK}
\rm
Let us explain why convergence in the Kantorovich distance can be more easily verified.
We consider the following example (see also \cite[Remark~4.2]{BKSH17}).
Let $A=I$, $\sup_{x, \mu}|b(x, \mu)|(1+|x|)^{-m}<\infty$
and suppose that there exist numbers $\kappa>0$ and $C>0$ such that
$$
|b(x, \mu)-b(x, \sigma)|\le CW_1(\mu, \sigma), \quad
\langle x-y, b(x, \mu)-b(y, \mu)\rangle\le-\kappa|x-y|^2,
$$
where $W_1(\mu, \sigma)$ is the Kantorovich metric defined as the supremum of the
quantities
$$
\int_{\mathbb{R}^d} \varphi\,d(\mu-\sigma)
$$
over all $1$-Lipschitz functions $\varphi$.
Suppose that $C<\kappa$.
Then it is not difficult to show that the solution  $\{\mu_t\}$ to the  Cauchy problem (\ref{eq1A})
with the initial condition $\nu$
converges to the stationary solution~$\mu$  (the existence of which
follows by \cite[Theorem 4.1]{BKSH17}),
moreover,
$$W_1(\mu_t, \mu)\le e^{-(\kappa-C)t}W_1(\nu, \mu).$$
Let us note the remarkable sharpness of this result: in Example~\ref{ex1.1} we have
\mbox{$\kappa=1$,} $C=\varepsilon$, and already for $C=\kappa$ ($\varepsilon=1$) the  assertion about
convergence fails. A~justification of this
result is not difficult. Let $\{T_t\}$ be the semigroup on $L^1(\mu)$ generated
by the operator $Lf=\Delta f+\langle b(x, \mu), \nabla f\rangle$ (see \cite[Chapter~5]{BKRSH-book}).
The measure $\mu$ is invariant  with respect to~$T_t$.
For every function $f\in C_0^{\infty}(\mathbb{R}^d)$
with $|\nabla f|\le 1$ we have  $|\nabla T_tf|\le e^{-\kappa t}$
(see \cite[Theorem~5.6.41]{BKRSH-book}).
Multiplying the  Fokker--Planck--Kolmogorov  equation by the function
$u(x, t)=T_{\tau-t}f(x)$ and integrating by parts (which is possible by the stated properties of~$u$) we derive that
$$
\int_{\mathbb{R}^d} f\,d(\mu_{\tau}-\mu)=\int_{\mathbb{R}^d} u(x, 0)\,d(\nu-\mu)+
\int_0^{\tau}\int_{\mathbb{R}^d} \langle b(x, \mu_t)-b(x, \mu), \nabla u\rangle\,d\mu_t\,dt,
$$
 which yields the estimate
$$
W_1(\mu_t, \mu)\le W_1(\nu, \mu)e^{-\kappa\tau}
+C\int_0^{\tau}W_1(\mu_t, \mu)e^{-\kappa(\tau-t)}\,dt.
$$
It remains to apply Gronwall's inequality.
\end{remark}

\section{Solvability of nonlinear  Fokker--Planck--Kolmogorov equations}

In this section  we discuss  conditions under which the stationary  equation
and the Cauchy  problem for the   Fokker--Planck--Kolmogorov equation  have solutions.

Let  $\mathcal{H}$ be the set of all functions
$h\in C(\mathbb{R}^d)$  such that
$\sup_x |h(x)|/W(x)<\infty$ and
$$
L_{\sigma}\psi(x)=C_1(\sigma)h(x)+C_2(\sigma)
$$
for some function $\psi\in I_0^W$ and all
$\sigma\in\mathcal{P}_V(\mathbb{R}^d)$,
where $C_1(\sigma)$ and $C_2(\sigma)$ are numbers depending on~$\sigma$.
According to Proposition \ref{pr02}, if $\mu$ is a solution to the
linear stationary Fokker--Planck--Kolmogorov  equation with the drift coefficient
$b(x, \sigma)$, then $\mu(h)=\sigma(h)$ for every function $h\in\mathcal{H}$.

\begin{proposition}\label{pr1}
Suppose that for every measure $\nu\in\mathcal{P}_V(\mathbb{R}^d)$ and every ball $U$
there exist numbers $\delta\in(0, 1)$, $C>0$, $\Lambda>0$ and for every ball $U$
there exists a positive function $\alpha\mapsto N(\alpha, U)$, which
 depends also on~$\nu$,
such that for all $\alpha>0$ and $\mu\in \mathcal{M}_{\alpha}(V)$
 satisfying the condition
$\mu|_{\mathcal{H}}=\nu|_{\mathcal{H}}$ we have
$$
L_{\mu}V\le (1-\delta)C+\Lambda(\delta\alpha-V), \quad \sup_{x\in U}|b(x, \mu)|
\le N(\alpha, U).
$$
Suppose also that for all $\alpha>0$ and $\mu_n, \mu\in\mathcal{M}_{\alpha}(V)$
convergence $\|\mu_n-\mu\|_W\to 0$ yields that
$b(x, \mu_n)$ converges to $b(x, \mu)$ uniformly on every ball.
Then for every measure $\nu\in\mathcal{P}_V(\mathbb{R}^d)$ there
exists a solution $\mu$ to the stationary equation {\rm (\ref{eq1AS})} such
that $\nu|_{\mathcal{H}}=\mu|_{\mathcal{H}}$.
\end{proposition}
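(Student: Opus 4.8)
The plan is to obtain the stationary solution as a fixed point of the linearization map $\Phi\colon\sigma\mapsto\mu$, where $\mu$ is the probability solution of the \emph{linear} stationary equation $L_\sigma^{*}\mu=0$, and to apply Schauder's theorem in the Banach space of signed measures with finite norm $\|\cdot\|_W$. Fix $\nu$ and let $\delta\in(0,1)$, $C$, $\Lambda$ be the constants supplied by the hypothesis; put $\alpha=\max\{C/\Lambda,\int_{\mathbb{R}^d}V\,d\nu\}$ and
\[
\mathcal{K}=\{\sigma\in\mathcal{M}_{\alpha}(V)\colon \sigma|_{\mathcal{H}}=\nu|_{\mathcal{H}}\}.
\]
Since $W=V^{\gamma}\ge 1$ and $|h|\le c_hW$ for every $h\in\mathcal{H}$, the $\mathcal{H}$-constraints survive $\|\cdot\|_W$-limits, while $\int V\,d\mu\le\alpha$ survives by Fatou; hence $\mathcal{K}$ is convex and $\|\cdot\|_W$-closed, and $\nu\in\mathcal{K}$ shows it is nonempty.

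First I would check that $\Phi$ is well defined and maps $\mathcal{K}$ into itself. For $\sigma\in\mathcal{K}$ the drift $b(\cdot,\sigma)$ is locally bounded, so existence and uniqueness of the probability solution $\mu=\Phi(\sigma)$, together with a strictly positive locally Hölder density, come from the theory of nondegenerate linear Fokker--Planck--Kolmogorov equations in \cite{BKRSH-book}. Applying the Lyapunov bound to $\sigma$ (legitimate because $\sigma|_{\mathcal{H}}=\nu|_{\mathcal{H}}$) and testing $L_\sigma^{*}\mu=0$ against $V$ through a truncation argument gives $\Lambda\int V\,d\mu\le(1-\delta)C+\Lambda\delta\alpha$, hence $\int V\,d\mu\le\alpha$ by the choice of $\alpha$; and Proposition~\ref{pr04} yields $\mu(h)=\sigma(h)=\nu(h)$ for all $h\in\mathcal{H}$. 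Thus $\Phi(\mathcal{K})\subseteq\mathcal{K}$. (If the linear stationary solution were not unique, one would instead use the Kakutani--Fan--Glicksberg theorem for the convex-valued map $\sigma\mapsto\{\mu\colon L_\sigma^{*}\mu=0\}\cap\mathcal{K}$.)

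The heart of the argument is that $\Phi(\mathcal{K})$ is relatively compact in $\|\cdot\|_W$. Since $\sup_{x\in U}|b(x,\sigma)|\le N(\alpha,U)$ uniformly over $\sigma\in\mathcal{K}$, interior elliptic estimates for stationary densities \cite{BKRSH-book} make the densities of $\Phi(\sigma)$ locally bounded and equicontinuous uniformly in $\sigma$, and Arzel\`a--Ascoli with a diagonal argument extracts locally uniformly convergent subsequences. The uniform moment bound then upgrades local convergence to $\|\cdot\|_W$-convergence: in $\int W|\varrho_n-\varrho|\,dx$ the ball term vanishes by local convergence, while the tail is dominated by $(\inf_{|x|>R}V)^{\gamma-1}\int V\,d\mu_n\le\alpha(\inf_{|x|>R}V)^{\gamma-1}\to 0$ uniformly in $n$, where $\gamma\le 1/2<1$ is essential. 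Continuity of $\Phi$ in $\|\cdot\|_W$ follows in the same spirit: if $\sigma_n\to\sigma$ in $\|\cdot\|_W$, the continuity hypothesis gives $b(\cdot,\sigma_n)\to b(\cdot,\sigma)$ uniformly on balls, so every $\|\cdot\|_W$-limit point of $\Phi(\sigma_n)$ solves $L_\sigma^{*}\mu=0$ (pass to the limit in the integral identity using local density convergence and uniform-on-balls drift convergence) and hence equals $\Phi(\sigma)$ by uniqueness, forcing convergence of the whole sequence. Letting $\mathcal{C}$ be the closed convex hull of $\Phi(\mathcal{K})$, Mazur's theorem makes $\mathcal{C}$ compact, $\mathcal{C}\subseteq\mathcal{K}$ since $\mathcal{K}$ is closed and convex, and $\Phi(\mathcal{C})\subseteq\mathcal{C}$; Schauder's theorem then produces $\mu\in\mathcal{C}$ with $\Phi(\mu)=\mu$, i.e.\ a solution of $L_\mu^{*}\mu=0$ with $\mu|_{\mathcal{H}}=\nu|_{\mathcal{H}}$.

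I expect the main obstacle to be the mismatch of topologies: the continuity assumption on $b$ is phrased in $\|\cdot\|_W$, whereas compactness of families of measures is governed by moments and weak convergence. The bridge is precisely the interior regularity of the stationary densities, with constants depending only on the ellipticity of $A$ and on the uniform local bound $N(\alpha,U)$; this is what turns weak/local convergence into genuine $\|\cdot\|_W$-convergence on $\Phi(\mathcal{K})$ and makes both the compactness and the continuity of $\Phi$ work simultaneously. The only other delicate point is using the Lyapunov function $V$ (rather than a compactly supported test function) in the moment estimate, which the truncation argument resolves.
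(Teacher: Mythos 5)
Your proposal is correct and takes essentially the same route as the paper: the stationary solution is produced as a Schauder fixed point of the linearization $\sigma\mapsto\mu$ solving $L_\sigma^{*}\mu=0$, the $\mathcal{H}$-constraints are preserved via Proposition~\ref{pr04}, the uniform moment bound comes from the Lyapunov condition with $\alpha=\max\{C\Lambda^{-1},\|V\|_{L^1(\nu)}\}$, and interior H\"older estimates (constants controlled by $N(\alpha,U)$) together with the tail bound $\int_{|x|>R}W\,d\mu\le\alpha\sup_{|x|>R}V(x)^{\gamma-1}$ convert local uniform convergence into $\|\cdot\|_W$-convergence, exactly as in the paper's compactness and continuity arguments. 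The only differences are cosmetic --- the paper builds the H\"older bounds directly into a compact convex set in $L^1(\mathbb{R}^d)$ rather than invoking Mazur's theorem on the closed convex hull of $\Phi(\mathcal{K})$ in the $W$-weighted total variation space --- though you should attribute the existence of the probability solution $\Phi(\sigma)$ to the Lyapunov bound, not merely to local boundedness of the drift.
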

\begin{proof}
Let $\sigma\in\mathcal{P}_V(\mathbb{R}^d)$ and
$\sigma|_{\mathcal{H}}=\nu|_{\mathcal{H}}$.
It is well-known (see \cite[Corollary 2.4.2 and
Theorem 4.1.6]{BKRSH-book}) that
there is a unique probability solution $\mu$
to the linear equation
$$
L_\sigma^{*}\mu=0.
$$
According to \cite[Theorem 2.3.2]{BKRSH-book} we have
$$
\int_{\mathbb{R}^d}V\,d\mu\le (1-\delta)C\Lambda^{-1}+\delta
\int_{\mathbb{R}^d}V\,d\sigma.
$$
Set
$$
\alpha=\max\{C\Lambda^{-1}, \|V\|_{L^1(\sigma)}\}.
$$
Thus we have $\|V\|_{L^1(\mu)}\le \alpha$ and $\mu|_{\mathcal{H}}
=\sigma|_{\mathcal{H}}=\nu|_{\mathcal{H}}$.
Since $\sup_{U}|b(x, \mu)|\le N(\alpha, U)$,
the measure $\mu$ has a density $\varrho$ and,
for every ball $U$ and for some $\delta\in(0, 1)$, one has
$$
\|\varrho\|_{C^{\delta}(U)}\le C(U),
$$
where $C(U)$ depends only on $U$, $d$, $N(\alpha, U)$, and $A$
(see \cite[Corollary 1.6.7]{BKRSH-book}).
Set
$$
\mathcal{K}=\Bigl\{\mu\in\mathcal{M}_{\alpha}(V)\colon
\mu|_{\mathcal{H}}=\nu|_{\mathcal{H}}, \ \
\mu=\varrho\,dx, \ \ \|\varrho\|_{C^{\delta}(U)}\le C(U)\Bigr\},
$$
where $C^{\delta}(U)$ is the space of $\delta$-H\"older functions
with its natural H\"older norm
$$
\|g\|_{C^{\delta}(U)}
=\sup_x|g(x)|+\sup_{x\not=y}|g(x)-g(y)|/|x-y|^\delta.
$$
The set $\mathcal{K}$ is convex and
compact in $L^1(\mathbb{R}^d)$ and  $T\colon \mathcal{K}\to\mathcal{K}$.
Compactness follows from the fact that, for any sequence $\{\varrho_n\}
\in \mathcal{K}$, the measures
$\varrho_n\, dx$ are uniformly tight, hence there is a weakly
convergent subsequence. The uniform local
H\"older continuity enables us to select a further subsequence
in $\{\varrho_n\}$ that converges uniformly
on balls. Along with weak convergence this yields convergence in variation.

Next, if $\varrho_n\in\mathcal{K}$ and $\varrho_n\to\varrho$
in $L^1(\mathbb{R}^d)$, then
$$
\|\varrho_n-\varrho\|_{W}\le \max_{|x|\le R}W(x)\|\varrho_{n}
-\varrho\|_{L^1(\mathbb{R}^d)}
+2\alpha(\min_{|x|\ge R}W(x))^{-1}.
$$
It follows that $\|\varrho_n-\varrho\|_{W}\to 0$
and $b(x, \varrho_n\,dx)\to b(x, \varrho\,dx)$ uniformly on every ball.
For every $h\in\mathcal{H}$ we obtain $\displaystyle
\int h\varrho_n\,dx\to \int h\varrho\,dx$.
Moreover, $\sigma_n=T(\varrho_n)$ has a subsequence that
converges uniformly on every ball and in $L^1(\mathbb{R}^d)$.
Therefore, $\sigma_n\to\sigma$,
where $\sigma$ is a unique probability solution to the equation
$$
L_{\varrho\,dx}^{*}\sigma=0.
$$
Consequently, $T$ is a continuous mapping.
By Schauder's fixed point theorem there exists $\mu\in\mathcal{K}$
such that $T(\mu)=\mu$.
\end{proof}

\begin{example}\label{ex-st}\rm
Let $d\ge 1$, $A=I$,
$$
b(x, \mu)=-Rx
+\biggl(\int_{\mathbb{R}^d} \langle v, y\rangle\,\mu(dy)\biggr)h
+\int_{\mathbb{R}^d} H(x, y)\,\mu(dy),
$$
where $R$ is a constant matrix, $v$ and $h$ are constant vectors and
$$R^{*}v=\lambda v, \quad \langle v, h\rangle=\lambda, \quad \langle H(x, y), v\rangle=0.$$
Suppose also that
$$
\langle Rx, x\rangle \ge q|x|^2, \quad q>0, \quad \sup_{x, y}|H(x, y)|<\infty.
$$
We show that all conditions in Proposition \ref{pr1} are fulfilled with $V(x)=(1+|x|)^2$ and $W(x)=(1+|x|)$.
The function $x\to \langle v, x\rangle$ belongs to~$I_0^W$.
In addition, according to Example \ref{exa1} the function $x\to \langle v, x\rangle$ belongs to $\mathcal{H}$.
Let $\mu$ be a  probability measure with
$$
\int_{\mathbb{R}^d} \langle v, y\rangle\,\mu(dy)=Q.
$$
Then
$$
L_{\mu}(1+|x|^2)\le 2d+q+q^{-1}(|h||Q|+\sup_{x, y}|H(x, y)|)^2-q(1+|x|^2),
$$
$$
|b(x, \mu)|\le (\|R\|+|h||Q|+\sup_{x, y}|H(x, y)|)(1+|x|^2)^{1/2}.
$$
Thus, the conditions of the theorem are fulfilled and for every number  $Q$ there
exists a solution to the stationary equation with such coefficients.
\end{example}

\begin{remark}\label{rem1}\rm
Suppose that in place of  conditions of Proposition \ref{pr1}  the following
stronger conditions hold: there are positive numbers $C_1$, $C_2$ such that,
for every ball $U$, there is a positive function
$\alpha\mapsto N(\alpha, U)$
such that for all $\alpha>0$ and all $\mu\in \mathcal{M}_{\alpha}(V)$
we have
$$
L_{\mu}V\le C_1-C_2V, \quad \sup_{x\in U}|b(x, \mu)|\le N(\alpha, U).
$$
Suppose also that for all $\alpha>0$ and $\mu_n,
 \mu\in\mathcal{M}_{\alpha}(V)$
convergence  $\|\mu_n-\mu\|_W\to 0$ yields  that
$b(x, \mu_n)$ converges to $b(x, \mu)$ uniformly on every ball.
Then there exists a stationary solution $\mu$ such that
$$
\int_{\mathbb{R}^d}V\,d\mu\le\frac{C_1}{C_2}.
$$
The proof repeats the reasoning given above.
We only observe that for every $\sigma\in\mathcal{P}_V(\mathbb{R}^d)$
the solution $\mu$ to the equation  $L_{\sigma}^{*}\mu=0$
satisfies the  inequality $\|V\|_{L^1(\mu)}<C_1/C_2$ (see
\cite[Theorem 2.3.2]{BKRSH-book})
and the mapping $T$ from the proof of Proposition \ref{pr1}
maps $\mathcal{M}_{\alpha}(V)$ with $\alpha=C_1/C_2$ into the same set.
\end{remark}

We now discuss the  Cauchy problem (\ref{eq1A}). The next proposition gives
conditions that guarantee the existence of a solution $\mu_t$ on $[0, +\infty)$ such that for every $T>0$
one has
$$
\sup_{t\in[0, T]}\int_{\mathbb{R}^d}V\,d\mu_t<\infty.
$$

\begin{proposition}\label{pr2}
Suppose that there exist continuous positive functions
$N_1$, $N_2$, $N_3$ such that
for all $\alpha>0$ and $\mu, \sigma\in\mathcal{M}_{\alpha}(V)$
we have
$$
L_{\mu}V(x)\le N_1(\alpha), \quad |b(x, \mu)|\le N_2(\alpha)
V^{\frac{1}{2}-\gamma}, \quad
|b(x, \mu)-b(x, \sigma)|\le N_3(\alpha)V(x)^{\frac{1}{2}}\|\mu-\sigma\|_{W}.
$$
If
\begin{equation}\label{b4}
\int_0^{+\infty}\frac{d\alpha}{N_1(\alpha)}=+\infty,
\end{equation}
then for every initial condition $\nu\in\mathcal{P}_V(\mathbb{R}^d)$ there
exists a solution  $\{\mu_t\}$ to the  Cauchy problem {\rm (\ref{eq1A})} on $[0, +\infty)$ such that for every $T>0$
$$
\sup_{t\in[0, T]}\int_{\mathbb{R}^d}V\,d\mu_t<\infty.
$$
\end{proposition}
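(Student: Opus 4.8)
The plan is to combine an a priori moment bound, which is where the Osgood-type condition~(\ref{b4}) enters, with a short-time contraction argument for a fixed-point map on paths of measures. The moment bound then lets me iterate the short-time construction over any interval $[0,T]$ and hence reach $[0,+\infty)$. Throughout I use that the linear Cauchy problem $\partial_t\mu_t=L_{\sigma_t}^{*}\mu_t$, $\mu_0=\nu$, with a frozen path $\{\sigma_t\}$ has a unique probability solution, which follows from \cite{BKRSH-book}.

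First I would record the a priori estimate. For a solution $\{\mu_t\}$, set $\phi(t)=\int_{\mathbb{R}^d}V\,d\mu_t$. Using $V$ (suitably truncated, as is standard for moment bounds of Fokker--Planck--Kolmogorov equations) as a test function together with $L_{\mu_t}V\le N_1(\phi(t))$, one obtains the differential inequality $\phi'(t)\le N_1(\phi(t))$. Since $N_1$ is continuous and positive and $\int_0^{+\infty}N_1(\alpha)^{-1}\,d\alpha=+\infty$, the comparison ODE $\psi'=N_1(\psi)$, $\psi(0)=\int V\,d\nu$, does not explode in finite time, so $\phi(t)\le R_T<\infty$ on every $[0,T]$. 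This is the only role of~(\ref{b4}), and it already yields the asserted finiteness of $\sup_{t\in[0,T]}\int V\,d\mu_t$ for the solution, once that solution has been built.

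For local existence I would freeze the nonlinearity: given a path $\{\sigma_t\}_{t\in[0,\tau]}$ in $\mathcal{P}_V(\mathbb{R}^d)$, continuous in $\|\cdot\|_W$, with $\sigma_0=\nu$ and $\sup_t\int V\,d\sigma_t\le R$, let $\Phi(\{\sigma_t\})=\{\mu_t\}$ be the linear solution above. The bound $L_{\sigma_t}V\le N_1(R)$ gives $\int V\,d\mu_t\le\int V\,d\nu+N_1(R)t$, so $\Phi$ preserves the moment bound $R$ for $\tau$ small, and I would work in the complete metric space of such paths with distance $\sup_{t\le\tau}\|\sigma_t-\sigma'_t\|_W$. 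The contraction estimate follows the scheme of Lemma~\ref{lem2}: writing $\{\mu_t\}=\Phi(\{\sigma_t\})$, $\{\mu'_t\}=\Phi(\{\sigma'_t\})$ and using the backward equation $\partial_s u+L_{\sigma_s}u=0$, $u(\cdot,t)=\psi$ with $|\psi|\le W$, duality gives
$$
\int_{\mathbb{R}^d}\psi\,d(\mu_t-\mu'_t)=\int_0^t\int_{\mathbb{R}^d}\langle b(x,\sigma_s)-b(x,\sigma'_s),\nabla_x u\rangle\,d\mu'_s\,ds .
$$
Then the Lipschitz hypothesis $|b(x,\sigma_s)-b(x,\sigma'_s)|\le N_3(R)V(x)^{1/2}\|\sigma_s-\sigma'_s\|_W$, two applications of Cauchy--Schwarz, the energy estimate $\int_0^t\int|\nabla_x u|^2\,d\mu'_s\,ds\le C$ (of the type in Lemma~\ref{lem2}, valid because $W^2=V^{2\gamma}\le V$ as $\gamma\le 1/2$), and the moment bound combine to give
$$
\|\mu_t-\mu'_t\|_W\le C(R)\,t^{1/2}\sup_{s\le t}\|\sigma_s-\sigma'_s\|_W .
$$
Choosing $\tau=\tau(R)$ small makes this a contraction, so Banach's theorem produces a unique fixed point, i.e. a solution on $[0,\tau]$ (and local uniqueness, which is the uniqueness cited before Lemma~\ref{lem2}).

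Finally I would globalize. By the a priori bound, $\int V\,d\mu_t\le R_T$ on $[0,T]$, so restarting at time $\tau$ with datum $\mu_\tau$ the admissible step length $\tau(R_T)$ is bounded below independently of the starting point; finitely many steps then cover $[0,T]$, and letting $T\to+\infty$ produces the solution on $[0,+\infty)$ with the stated moment control. The main obstacle is precisely the contraction estimate in the weighted total variation norm: one must control $\int|\nabla_x u|^2\,d\mu'_s$ for the backward linear equation and match the weight $V^{1/2}$ coming from the Lipschitz hypothesis against the weight $W$ built into $\|\cdot\|_W$, which is exactly where $\gamma\le 1/2$ is used. The truncation needed to justify the moment differential inequality is routine but must be carried out to make the Osgood argument rigorous.
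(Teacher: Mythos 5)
Your proposal is correct in outline but follows a genuinely different route from the paper. The paper's proof uses no contraction at all: it fixes $T$, defines a moment profile $\alpha(t)$ by $\int_{\alpha_0}^{\alpha(t)}du/N_1(u)=t$ with $\alpha_0=\int V\,d\nu$ (this is exactly where \eqref{b4} enters, making $\alpha$ globally defined, so no continuation argument is needed), observes via \cite[Theorem 7.1.1]{BKRSH-book} that the frozen linear solution map preserves the set of paths of densities satisfying $\int V\,d\mu_t\le\alpha(t)$ together with uniform local H\"older bounds from regularity theory, and then applies Schauder's fixed point theorem on this convex compact subset of $L^1(\mathbb{R}^d\times[0,T])$; the Lipschitz-in-$\mu$ hypothesis is used only qualitatively, to establish continuity of the frozen map, and uniqueness is quoted from \cite[Theorem 3.1]{ManRomShap} rather than proved. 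Your Banach-plus-continuation scheme uses the Lipschitz hypothesis quantitatively: it buys local uniqueness for free and avoids the compactness and H\"older-estimate machinery, at the price of needing two-parameter backward evolution families for time-dependent, merely Borel drifts together with the energy estimate of Lemma~\ref{lem2}, and a restarting argument that the paper's invariant-profile trick renders unnecessary. Both proofs use \eqref{b4} in the same essential way, namely non-explosion of the comparison equation $\alpha'=N_1(\alpha)$.

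One step of your contraction estimate needs repair as written. In the energy identity for $\int u^2\,d\mu'_s$ the cross term is $2u\langle b(x,\sigma_s)-b(x,\sigma'_s),\nabla u\rangle$; if you bound the drift difference there by the Lipschitz hypothesis $N_3(R)V^{1/2}\|\sigma_s-\sigma'_s\|_W$, then after $|u|\le C\,W$ and Young's inequality you are left with a term of order $W^2V=V^{2\gamma+1}$, which is not integrable against $\mu'_s$ under the only available moment bound $\int V\,d\mu'_s\le R$; your remark that $W^2=V^{2\gamma}\le V$ handles only the boundary term $\int\psi^2\,d\mu'_t$, not this cross term. The fix is already contained in the hypotheses of Proposition~\ref{pr2}: in the energy estimate bound the drift difference by $|b(x,\sigma_s)-b(x,\sigma'_s)|\le 2N_2(R)V^{1/2-\gamma}$ using the growth bound on $b$ itself, exactly as in Lemma~\ref{lem2}, which gives $W\cdot V^{1/2-\gamma}=V^{1/2}$ and hence a $V$-integrable term after Young; the $V^{1/2}$-weighted Lipschitz bound should be used only in the other Cauchy--Schwarz factor, $\int|b(x,\sigma_s)-b(x,\sigma'_s)|^2\,d\mu_s\le N_3(R)^2R\,\|\sigma_s-\sigma'_s\|_W^2$, where it pairs correctly with the moment bound. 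With that correction, plus the routine truncation justifying the Osgood differential inequality (which you flag yourself), your argument goes through and in addition yields the uniqueness that the paper outsources.
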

\begin{proof}
We first prove the existence of a solution on $[0, T]$ for every fixed $T>0$.
Let $\alpha(t)$ be a positive continuous function on $[0, T]$. If
$\sigma_t\in\mathcal{M}_{\alpha(t)}(V)$, then
$$
|b(x, \sigma_t)|\le MV(x)^{1/2-\gamma}, \quad M=\max_{t\in [0, T]}N_2(\alpha(t)).
$$
Since $L_{\sigma_t}V(x)\le N_1(\alpha(t))\le \max_{t\in [0, T]}N_1(\alpha(t))$,
by the standard existence condition involving a Lyapunov function (see \cite{BKRSH-book})
there exists a unique solution $\{\mu_t\}$ to the Cauchy problem
$$
\partial_t\mu_t=L_{\sigma_t}^{*}\mu_t, \quad \mu_0=\nu.
$$
Moreover, $\mu_t(dx)=\varrho(x, t)\,dx$ and for some $\delta\in(0, 1)$ we have
$$
\|\varrho\|_{C^{\delta}(U\times J)}\le C(U, J)
$$
for every ball $U\subset\mathbb{R}^d$ and every interval $J\subset(0, T)$.
Note also that by  \cite[Theorem 7.1.1]{BKRSH-book}
$$
\int_{\mathbb{R}^d}V(x)\,\mu_t(dx)\le
\int_0^t N_1(\alpha(s))\,ds+\int_{\mathbb{R}^d}V(x)\,\nu(dx).
$$
Let us define $\alpha(t)$ by means of the following expression:
$$
\int_{\alpha_0}^{\alpha(t)}\frac{du}{N_1(u)}=t, \quad \alpha_0
=\int_{\mathbb{R}^d}V(x)\,\nu(dx).
$$
By (\ref{b4}) the function $\alpha$ is defined on $[0, +\infty)$.
If we take $\sigma_t\in\mathcal{M}_{\alpha(t)}(V)$,
then the corresponding solution $\mu_t$ will belong
to $\mathcal{M}_{\alpha(t)}(V)$.
Let $\mathcal{K}_1$ be the set of all functions $\varrho$
on $\mathbb{R}^d\times[0, T]$
such that $\varrho\in C(\mathbb{R}^d\times(0, T))$, $\varrho\ge 0$ and
$$
\int_{\mathbb{R}^d}\varrho(x, t)\,dx=1, \quad
\int_{\mathbb{R}^d}\varrho(x, t)V(x)\,dx\le\alpha(t), \quad
\|\varrho\|_{C^{\delta}(U\times J)}\le C(U, J).
$$
Note that $\mathcal{K}_1$ is convex and compact
in $L^1(\mathbb{R}^d\times[0, T])$.
Let us define $T\colon \mathcal{K}_1\to\mathcal{K}_1$ as follows:
to each $\sigma_t=v(x, t)\,dx$ with $v\in\mathcal{K}_1$,
the mapping $T$ associates the solution $\mu_t=\varrho(x, t)\,dx$.
If $v_n, v\in \mathcal{K}_1$ and $v_n\to v$
in $L^1(\mathbb{R}^d\times[0, T])$, then
$\|v_n(y, t)dy-v(y, t)dy\|_{W}\to 0$. It follows
that $b(x, v_n(y, t)\,dy)\to b(x, v(y, t)\,dy)$
for all $(x, t)$. Thus, the corresponding
solutions $\varrho_n$ converge to the solution $\varrho$, hence
the mapping $T$ is continuous. By Schauder's
fixed point theorem there exists $\varrho\in\mathcal{K}_1$
such that $T(\varrho)=\varrho$.
By  \cite[Theorem 3.1]{ManRomShap} the constructed solution
to the Cauchy problem (\ref{eq1A})
is unique on $[0, T]$ for every $T>0$.
This yields the existence and uniqueness on the whole half-line
$[0, +\infty)$.
\end{proof}

This paper has been supported by the RFBR Grant 17-01-00662, the RF President Grant MD-207.2017.1,
the Simons Foundation,  and the DFG through SFB 1283 at Bielefeld University.

\end{document}